\colorlet{shadecolor}{gray!40}
\theoremstyle{plain}
\newtheorem{theorem}{Theorem}[section]
\newtheorem{proposition}[theorem]{Proposition}
\theoremstyle{definition}
\newtheorem{definition}[theorem]{Definition}
\newtheorem{example}[theorem]{Example}
\theoremstyle{remark}
\newtheorem{remark}[theorem]{Remark}
\def\NN{{\mathbb N}}
\def\ZZ{{\mathbb Z}}
\def\kb{{\mathbf k}}
\def\xb{{\mathbf x}}
\def\ab{{\mathbf a}}
\def\Pc{{\mathcal P}}
\def\Fc{{\mathcal F}}
\def\Hc{{\mathcal H}}
\def\Dc{{\mathcal D}}
\def\mk{{\mathfrak m}}
\def\lcm{\mathrm{lcm}}
\def\supp{\mathrm{supp}}
\def\ms{\mathrm{ms}}
\def\height{\mathrm{height}}
\def\projdim{\mathrm{projdim}}
\def\reg{\mathrm{reg}}
\DeclareMathOperator{\Tor}{Tor}
\newcommand{\1}{\mathbbm{1}}
\title[Polarization and depolarization of abstract simplicial complexes]{Monomial polarization and depolarization of abstract simplicial complexes}
\author[L\'opez]{Víctor M. L\'opez-Antón}
\address{Departamento de Matemáticas y Computación, Universidad de La Rioja, Spain}
\email{victor-manuel.lopez@unirioja.es}
\author[Munarriz]{Pablo Munarriz-Senosiain}
\address{Departamento de Matemáticas y Computación, Universidad de La Rioja, Spain}
\email{pablo.munarriz@unirioja.es}
\author[Pascual]{Patricia Pascual-Ortigosa}
\address{Departamento de Matemáticas y Computación, Universidad de La Rioja, Spain}
\email{patricia.pascualo@unirioja.es}
\author[S\'aenz-de-Cabez\'on ]{Eduardo S\'aenz-de-Cabez\'on}
\address{Departamento de Matemáticas y Computación, Universidad de La Rioja, Spain}
\email{eduardo.saenz-de-cabezon@unirioja.es}
\begin{document}

\begin{abstract}
We translate the operations of polarization and depolarization from monomial ideals in a polynomial ring to abstract simplicial complexes. As a result, we explicitly describe the relation between the Koszul simplicial complex of a monomial ideal and that of its polarization. Using the simplicial translation of depolarization we propose a way to reduce a simplicial complex to a smaller one with the same homology. This type of reduction, that can be interpreted as non-elementary collapse, can be used as a pre-process step for algorithms on simplicial complexes. We apply this methodology to the efficient computation of the Alexander dual of abstract simplicial complexes.
\end{abstract}
\subjclass{Primary 55U10; Secondary 13F55}
\keywords{Simplicial complexes, polarization, depolarization, Koszul simplicial complex, Stanley-Reisner rings, Alexander dual}

\maketitle
\section{Introduction}
Polarization and depolarization are ways to study monomial ideals in a polynomial ring via squarefree monomial ideals and vice-versa. The operation of {\em polarization} assigns to every monomial ideal $I$ a squarefree one $\Pc(I)$ in another polynomial ring, in such a way that $I$ and $\Pc(I)$ have the same $\lcm$-lattice, Betti numbers and other important invariants \cite{V15,HH11,F05,GPW99}, see also \cite{AFL22,AV22,OY13} for a slightly different treatment of polarization. The opposite operation, {\em depolarization}, has been less studied \cite{MPSW20} and it is not unique, in the sense that the depolarizations of a squarefree ideal can be a set of several different monomial ideals. Squarefree monomial ideals, are in one-to-one correspondence to simplicial complexes via the Stanley-Reisner correspondence \cite{S75,S96,FMS14,F21}. This can be generalized to non-squarefree ideals by means of Koszul complexes \cite{MS05}. Our main goal is to express polarization and depolarization in the language of simplicial complexes in such a way that the Stanley-Reisner and Koszul correspondences remain coherent. Using this, we address three problems. First, we describe the relation between the Koszul simplicial complex of a monomial ideal and that of its polarization; second, we show how depolarization can reduce a simplicial complex while keeping its homology; and third, we use this reduction to efficiently compute the Alexander dual of abstract simplicial complexes. The rest of this introduction gives a more precise description of these three contributions.

Let $R=\kb[x_1,\dots,x_n]$ be a polynomial ring over a field $\kb$, and $I$ a monomial ideal in $R$. Let $\mu=(\mu_1,\dots,\mu_n)\in \NN^n$ be a multi-index, and $\xb^\mu=x_1^{\mu_1}\cdots x_n^{\mu_n}$ the corresponding monomial in $R$. The (upper) Koszul simplicial complex of $I$ at $\mu$ is defined in \cite{MS05} Ch. 1, as
\[
K^{\mu}_I=\left\{\sigma\subseteq\{1,\dots,n\}\mid \frac{\xb^\mu}{\xb_\sigma}\in I\right\}, \mbox{ where } \xb_\sigma=\prod_{i\in\sigma}x_i.
\]

If $I$ is a squarefree monomial ideal, then $K^{\1}_I=\Delta^\vee_I$, where $\1=(1,\dots,1)$ and $\Delta^\vee_I$ is the Alexander dual of the Stanley-Reisner complex of $I$, see \cite{S96}.

By the well-known Hochster's formula \cite{H77} and its variants, we know that for all $i\in\ZZ$ and $\mu\in\NN^n$
\begin{equation*}
    \beta_{i,\mu}(I)=\dim_\kb \widetilde{H}_{i-1}(K^\mu_I;\kb),
\end{equation*}
where $\beta_{i,\mu}(I)$
 is the $i$-th Betti number of $I$ at multidegree $\mu$, and $\widetilde{H}$ denotes reduced homology.

 The {\em polarization} of $I$, $\Pc(I)$, is an ideal in $n'$ variables, for a certain $n'\geq n$. We denote by $\Pc(R)$ the ring of $\Pc(I)$, and the polarization of $\mu\in\NN^n$ as $\Pc(\mu)$.
 It is known that Betti numbers are preserved under polarization \cite{HH11,GPW99}, i.e., $\beta_{i,\mu}(I)=\beta_{i,\Pc(\mu)}(\Pc(I)) \mbox{ for all } i\in\ZZ,\mu\in\NN^n$.

 Let $\mu_I=\lcm\{x^\nu\mid x^\nu\in G(I)\}$, where $G(I)$ is the (unique) minimal set of monomial generators of $I$. Then $\Pc(\mu_I)=\1_{\Pc(R)}$, and
 \[
 \dim_\kb\widetilde{H}_{i}(K^{\mu_I}_I;\kb)=\dim_\kb\widetilde{H}_{i}(K^\1_{\Pc(I)};\kb)=\dim_\kb\widetilde{H}_{i}(\Delta^\vee_{\Pc(I)};\kb).
 \]

 A question then arises:
 What is the topological/combinatorial relation between $K^{\mu_I}_I$ and $K^\1_{\Pc(I)}$? In this paper we give an answer to this question, showing explicitly how the polarization operation on $I$ transforms the simplicial complex $K^{\mu_I}_I$ on $K^\1_{\Pc(I)}$ while keeping the homology. This is our first main result, stated in Proposition \ref{prop:expandedIsKoszul}.
 
 We then explore the inverse procedure, i.e., given a simplicial complex, use {\em depolarization} on it. In this case, the simplicial version of depolarization can be seen as a way to reduce a simplicial complex $\Delta$ to a simpler or smaller one, $\Delta'$ with the same homology, in the vein of other well known strategies such as collapses \cite{W39,BG12,F24} or Discrete Morse Theory \cite{BV02,S06,F25}. The explicit description of this reduction is our second main result, Proposition \ref{prop:simplicialDepolarization}.

 Finally, using these constructions, we describe an algorithm to compute the Alexander dual of an abstract simplicial complex \cite{BT09}. The theoretical complexity of this problem is quasi-polynomial on the sum of the sizes of the input and output, and the size of the output can be exponential in terms of the input \cite{EGM02,RR25}. We propose Algorithm \ref{alg:AlexanderDual} which, given a simplicial complex $\Delta$, depolarizes its corresponding monomial ideal to obtain a simpler one, and computes the Alexander dual ideal of this depolarization. Finally, this dual is transformed so that the Alexander dual of $\Delta$ is obtained. In this procedure, the critical step, computing the Alexander dual, is performed on a smaller object, obtained by depolarization.

 The outline of the paper is as follows. In Section \ref{sec:preliminaries} we give some necessary definitions on monomial ideals and simplicial complexes. Section \ref{sec:ideals} describes the operations of polarization and depolarization on monomial ideals.  On Section \ref{sec:complexes} we describe the operations of polarization and depolarization of simplicial complexes. Section \ref{sec:AlexanderDual} presents our algorithms for Alexander dual computation and demonstrates its efficiency on several computer experiments. Finally, in Section \ref{sec:conclusions} we present some conclusions and lines for further work.

 \section{Preliminaries on monomial ideals and simplicial complexes}\label{sec:preliminaries}
 \subsection{Basic definitions and properties of monomial ideals}\ \\ 
 Let $R=\kb[x_1,\dots,x_n]$ be a polynomial ring in $n$ variables over a field $\kb$. A monomial ideal $I\subseteq R$ is an ideal of $R$ that has a generating set formed by monomials. Any monomial ideal $I$ has a unique minimal monomial generating set, denoted by $G(I)$. The importance of monomial ideals in commutative algebra is twofold. On the one hand, they are a practical tool to solve problems in conmutative algebra and algebraic geometry by reducing problems on general polynomial ideals and modules to monomial ones via Gr\"obner basis theory \cite{E95, HH11,V15}. On the other hand, their combinatorial nature allows the use of algebraic techniques to solve problems in other areas of combinatorics, such as graph theory or combinatorial algebraic topology, to name only two examples; see e.g. \cite{MS05,V13,FMS14,V15}. A remarkable result in this vein is the proof of the Upper Bound Conjecture for sphere triangulations by R. Stanley \cite{S75}. 

 The following notations and definitions will be used in this paper.
 \begin{itemize}
     \item[-] A monomial $\xb^\mu=x_1^{\mu_1}\cdots x_n^{\mu_n}$ is {\em squarefree} if $\mu_i\leq1$ for all $i\in\{1,\dots,n\}=[n]$. A monomial ideal $I$ is squarefree if its minimal monomial generating set consists of squarefree monomials. As is usual in the literature, we identify a monomial $\xb^\mu$ with its exponent multi-index $\mu$.
     \item[-] The {\em support} of a monomial $\xb^\mu$ is the set of indices of the variables that divide $\xb^\mu$, i.e., $\supp(\xb^\mu)=\left\{i\in[n]\mid \mu_i>0\right\}$. We also mean by support of $\xb^\mu$ the set of variables whose indices are in $\supp(\xb^\mu)$. The support of a monomial ideal $I\subset R$ is $\supp(I)=\cup_{m\in G(I)}\supp(m)$. We say $I$ has \emph{full support} if $\supp(I)=[n]$. We will assume that ideals have full support, unless otherwise stated.
     \item[-] For any set $\sigma\subseteq [n]$, the squarefree monomial $\prod_{i\in\sigma}x_i$ is denoted by $\xb_\sigma$.
     \item[-] $\xb^{\mu_I}=\lcm(\xb^\mu\mid \xb^\mu\in G(I))$ and $\xb^{\nu_I}=\gcd(\xb^\mu\mid \xb^\mu\in G(I))$ are, respectively, the monomials given by the least common multiple and greatest common divisor of the minimal monomial generators of $I$.
 \end{itemize}

Every monomial ideal $I$ can be seen as a (multi) graded $R$-module, and as such, we can define its Betti numbers (see \cite{E95,HH11,MS05}) as 
\begin{equation*}
    \beta_{i}(I)=\dim_{\kb}H_i(\Tor(I,\kb)).
\end{equation*}
Since $I$ is an $\NN^n$-multigraded module, then $\Tor(I,\kb)$ is $\NN^n$-multigraded too, and so are its homology modules, hence for each $\mu\in\NN^n$ we define $\beta_{i,\mu}(I)$ as the dimension of the multi-degree $\mu$ piece of $H_i(\Tor(I,\kb))$.

\subsection{Stanley-Reisner correspondence and Koszul complexes}\ \\ 
Squarefree monomial ideals are in one-to-one correspondence to simplicial complexes. This was defined by Stanley and Reisner \cite{S96} and was used to prove the Upper Bound Conjecture for sphere triangulations \cite{S75}; this correspondence has given rise to a fruitful area of research, see the class \texttt{13F55} in the 2020 Mathematics Subject Classification (see \url{https://msc2020.org/}).
\begin{definition}
    Let $\Delta$ be an abstract simplicial complex in the set of vertices $[n]$. The {\em Stanley-Reisner ideal} of $\Delta$ is defined as
    \[
    I_\Delta=\langle\xb_\sigma\mid\sigma \mbox{ is a minimal non-face of } \Delta\rangle.
    \]
    Conversely, given a squarefree monomial ideal $I$, the simplicial complex $\Delta_I$ such that $I=I_{\Delta_I}$ is called the Stanley-Reisner complex of $I$.
\end{definition}
The homology of $\Delta$ and $I_\Delta$ are related by Hochster's formula \cite{H77} and its variants. In our context, we will use the following formulation:
\begin{theorem}
Let $\Delta$ be an abstract simplicial complex, and $I_\Delta$ its Stanley-Reisner ideal, then
\[
\dim_\kb{{\widetilde{H}}_{i}(\Delta;\kb)}=\beta_{n-i-2}(I_\Delta).
\]
\end{theorem}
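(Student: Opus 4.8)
The plan is to deduce the statement from the two facts already recorded in the excerpt, namely the Koszul description of the multigraded Betti numbers, $\beta_{i,\mu}(I)=\dim_\kb\widetilde{H}_{i-1}(K^\mu_I;\kb)$, and the identity $K^{\1}_I=\Delta^\vee_I$ for a squarefree ideal $I$, together with combinatorial Alexander duality. Since $I_\Delta$ is squarefree and $\Delta$ is by construction the Stanley--Reisner complex of $I_\Delta$ (that is, $\Delta_{I_\Delta}=\Delta$), the Koszul complex at the top multidegree is exactly the Alexander dual, $K^{\1}_{I_\Delta}=\Delta^\vee$. First I would specialize the Koszul formula to $\mu=\1$, obtaining
\[
\beta_{i,\1}(I_\Delta)=\dim_\kb\widetilde{H}_{i-1}(K^{\1}_{I_\Delta};\kb)=\dim_\kb\widetilde{H}_{i-1}(\Delta^\vee;\kb).
\]
This reduces the claim to a purely topological comparison between the reduced homology of $\Delta$ and that of its Alexander dual $\Delta^\vee$; the pertinent Betti number on the right-hand side of the statement is understood to be the multigraded one in degree $\1$, which is the degree at which $K^{\1}$ lives.

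The second step is to invoke combinatorial Alexander duality \cite{BT09}: for a complex on the vertex set $[n]$ one has $\widetilde{H}_{j}(\Delta^\vee;\kb)\cong\widetilde{H}^{\,n-j-3}(\Delta;\kb)$. Taking $j=i-1$ gives $\widetilde{H}_{i-1}(\Delta^\vee;\kb)\cong\widetilde{H}^{\,n-i-2}(\Delta;\kb)$, and since we work over a field the reduced cohomology and homology in a fixed degree have the same $\kb$-dimension, so that $\dim_\kb\widetilde{H}^{\,n-i-2}(\Delta;\kb)=\dim_\kb\widetilde{H}_{n-i-2}(\Delta;\kb)$. Combining this with the previous display yields $\beta_{i,\1}(I_\Delta)=\dim_\kb\widetilde{H}_{n-i-2}(\Delta;\kb)$, and replacing the homological index $i$ by $n-i-2$ turns this into the asserted equality $\dim_\kb\widetilde{H}_{i}(\Delta;\kb)=\beta_{n-i-2,\1}(I_\Delta)$.

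I expect the main obstacle to be almost entirely bookkeeping: pinning down the index conventions in Alexander duality (the shift by $n$ and the homology/cohomology interchange) so that the final indices match $n-i-2$ exactly. If one prefers an argument that does not cite Alexander duality as a black box, it can be proved at the chain level through the order-reversing bijection $\sigma\mapsto[n]\setminus\sigma$, which identifies the augmented simplicial chain complex of $\Delta^\vee$ with the $\kb$-dual of the augmented cochain complex of $\Delta$ up to the degree shift by $n$; this is precisely where the shift and the passage to cohomology originate. A check on the extreme case $\Delta=\{\emptyset\}$, where $I_\Delta=\mk$ is resolved by the Koszul complex and carries its only degree-$\1$ Betti number in homological position $n-1$, confirms the normalization: there $\dim_\kb\widetilde{H}_{-1}(\Delta;\kb)=1=\beta_{n-1,\1}(I_\Delta)$, matching $n-i-2$ at $i=-1$.
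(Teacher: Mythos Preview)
Your proof is correct and follows precisely the route the paper itself indicates. The paper does not supply a detailed argument for this theorem, citing Hochster \cite{H77} directly, but in its subsequent restatement (Theorem~\ref{th:Hochster}) it records exactly the chain of equalities you derive, $\beta_{i,\1}(I)=\dim_\kb\widetilde{H}_{i-1}(K^\1_I;\kb)=\dim_\kb\widetilde{H}_{i-1}(\Delta^\vee_I;\kb)=\dim_\kb\widetilde{H}_{n-i-2}(\Delta_I;\kb)$, and attributes the last step to combinatorial Alexander duality \cite{BT09}. Your interpretation of $\beta_{n-i-2}$ as the multigraded piece in degree $\1$ is the one consistent with how the paper actually uses the formula downstream.
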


The correspondence between simplicial complexes and monomial ideals can be extended from squarefree to general monomial ideals by means of the Koszul simplicial complex \cite{MS05}, also related via the Alexander dual.
\begin{definition}
    Let $I\subseteq R=\kb[x_1,\dots,x_n]$ be a monomial ideal and $\mu\in\NN^n$ a multi-index. The (upper) Koszul simplicial complex of $I$ at $\mu$ is defined as
    \[
    K^\mu_I=\left\{\sigma\subseteq[n]\mid \frac{\xb^\mu}{\xb_\sigma}\in I\right\}.
    \]
\end{definition}

\begin{definition}
Let $\Delta$ be an abstract simplicial complex with ground set $V$. The Alexander dual of $\Delta$ is the abstract simplicial complex defined by: \[\Delta^{\vee} = (V,\{\sigma \subseteq V \; | \; V \setminus \sigma \not \in \Delta\}).\]
\end{definition}

In the squarefree case, we recover the Stanley-Reisner correspondence, since $K^\1_I=\Delta^\vee_I$, where $\Delta^\vee$ is the Alexander dual of $\Delta$.

Using the combinatorial Alexander duality \cite{BT09} for $\Delta_I$ we have the following relation.

\begin{theorem}
\label{th:Hochster}
Let $I\subseteq R=\kb[x_1,\dots,x_n]$ be a squarefree monomial ideal, and $\Delta_I$ its Stanley-Reisner complex. Then
    \[
    \beta_{i,\1}(I)=\dim_\kb\widetilde{H}_{i-1}(K^\1_I;\kb)=\dim_\kb\widetilde{H}_{i-1}(\Delta^\vee_I;\kb)=\dim_\kb\widetilde{H}_{n-i-2}(\Delta_I;\kb).
    \]
    The first equality can be extended to the non-squarefree case as \[ \beta_{i,\mu}(I)=\dim_\kb\widetilde{H}_{i-1}(K^\mu_I;\kb).\]
\end{theorem}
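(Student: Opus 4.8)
The plan is to prove the general identity $\beta_{i,\mu}(I)=\dim_\kb\widetilde{H}_{i-1}(K^\mu_I;\kb)$ first, for an arbitrary monomial ideal $I$ and arbitrary $\mu\in\NN^n$. Specialising it to a squarefree $I$ at $\mu=\1$ yields the leftmost equality of the statement, while the general case is exactly the claimed non-squarefree extension; so this single computation covers both ends of the chain, and the two remaining equalities become purely topological.

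To prove this identity I would compute the multigraded Betti numbers $\beta_{i,\mu}(I)=\dim_\kb\Tor^R_i(I,\kb)_\mu$ by tensoring $I$ with the Koszul complex $K_\bullet=\bigwedge^\bullet R^n$, which resolves $\kb$ over $R$, so that $\beta_{i,\mu}(I)=\dim_\kb\bigl(H_i(I\otimes_R K_\bullet)\bigr)_\mu$. The module $I\otimes_R\bigwedge^{i}R^n$ is a direct sum of shifted copies $I\cdot e_\sigma$, one for each $i$-subset $\sigma\subseteq[n]$, with the basis vector $e_\sigma$ placed in multidegree $\xb_\sigma$. Passing to the multidegree-$\mu$ strand, the summand $I\cdot e_\sigma$ contributes a one-dimensional space precisely when $\xb^\mu/\xb_\sigma\in I$, i.e. when $\sigma\in K^\mu_I$, and contributes zero otherwise. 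Hence the degree-$\mu$ strand has a $\kb$-basis indexed by the faces of $K^\mu_I$, an $i$-subset $\sigma$ living in homological degree $i$. I would then check that the restriction of the Koszul differential $e_\sigma\mapsto\sum_{j\in\sigma}\pm\,x_j\,e_{\sigma\setminus j}$ carries the generator attached to $\sigma$ to the signed sum of the generators attached to the facets $\sigma\setminus j$; this is well defined exactly because $I$ is an ideal (so $\sigma\in K^\mu_I$ forces each $\sigma\setminus j\in K^\mu_I$) and, after fixing a vertex order, agrees with the simplicial boundary of $K^\mu_I$. Identifying the summand $\sigma=\emptyset$ (homological degree $0$, corresponding to $\xb^\mu\in I$) with the augmentation term, the degree-$\mu$ strand is the reduced chain complex of $K^\mu_I$ with homological degree $i$ matched to reduced simplicial degree $i-1$; taking homology gives $\beta_{i,\mu}(I)=\dim_\kb\widetilde{H}_{i-1}(K^\mu_I;\kb)$.

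The second equality is then free: in the squarefree case at $\mu=\1$ the excerpt already records the identity of complexes $K^\1_I=\Delta^\vee_I$, so the two reduced homologies are literally equal. For the third equality I would invoke combinatorial Alexander duality \cite{BT09}, which for a complex on $n$ vertices gives $\widetilde{H}_{i-1}(\Delta^\vee_I;\kb)\cong\widetilde{H}^{\,n-i-2}(\Delta_I;\kb)$; since $\kb$ is a field, homology and cohomology of the finite complex $\Delta_I$ have equal dimension, so $\dim_\kb\widetilde{H}^{\,n-i-2}(\Delta_I;\kb)=\dim_\kb\widetilde{H}_{n-i-2}(\Delta_I;\kb)$, which closes the chain.

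The hard part will be the sign-and-index bookkeeping in the central step: one must verify that, once an ordering of the vertices is fixed, the wedge-product signs of the Koszul differential coincide with those of the simplicial boundary operator, and one must track both the single-unit shift between homological and reduced simplicial degree and the convention that the empty face is the augmentation term, so that the final indices come out correctly rather than off by one. By contrast, once this identification is secured the two topological equalities are immediate citations.
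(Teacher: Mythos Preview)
Your proof is correct and is in fact the standard argument (essentially the one in Miller--Sturmfels \cite{MS05}). Note, however, that the paper does not actually prove Theorem~\ref{th:Hochster}: it is stated as a known result, with the first equality attributed to Hochster \cite{H77} and the passage from $\Delta^\vee_I$ to $\Delta_I$ attributed to combinatorial Alexander duality \cite{BT09}. So there is no ``paper's own proof'' to compare against---you have supplied a full proof where the paper is content to cite.

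That said, your sketch lines up exactly with how the result is established in the references: compute $\Tor$ via the Koszul resolution of $\kb$, read off the multidegree-$\mu$ strand as the augmented simplicial chain complex of $K^\mu_I$, and then invoke $K^\1_I=\Delta^\vee_I$ and Alexander duality for the remaining equalities. Your caveats about sign bookkeeping and the degree shift are the right ones, and your handling of the empty face as the augmentation term is correct (note also that if $\xb^\mu\notin I$ then $K^\mu_I$ is the void complex and both sides vanish, which is consistent with your identification). Nothing is missing.
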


The notion of Alexander duality for abstract simplicial complexes can be defined for monomial ideals, see \cite{ER98,MS05,BT09b}. For a monomial $\xb^\mu\in R=\kb[x_1,\dots,x_n]$, denote by $\mk^\mu$ the ideal $\mk^\mu=\langle x_1^{\mu_1}, \dots, x_n^{\mu_n}\rangle$ (where only those indices $i$ such that $\mu_i>0$ are considered). 

\begin{definition}
    The {\em Alexander dual ideal} $I^\vee$ of the squarefree ideal $I$ is the (also squarefree) monomial ideal
    \[
        I^\vee=\bigcap_{\mu\in G(I)}\mk^\mu.
    \]

    In the case of non-squarefree monomial ideals, this notion can be extended in the following way, see \cite{MS05}. For $\mu,\nu\in\NN^n$ such that $\mu\geq\nu$ denote by $\mu\setminus\nu$ the multi-index given by
    \[
        (\mu\setminus\nu)_i=\mu_i+1-\nu_i,\mbox{ if } \nu_i>0,\mbox{ and } 0\mbox{ otherwise.}
    \]
    Then, the {\em Alexander dual ideal of the monomial ideal $I$ with respect to the monomial $\xb^\ab\geq \xb^{\mu_I}$} is 
    \[
        I^\vee_\ab=\bigcap_{\mu\in G(I)}\mk^{\ab\setminus\mu}.
    \]
\end{definition}

If $I$ is squarefree and $\ab=\1$, both definitions agree. If not otherwise stated, given a monomial ideal $I$ and $\mu_I$ the least common multiple of its minimal generators, Alexander duals will be considered with respect to $\mu_I$.

\section{Polarization and depolarization of monomial ideals}\label{sec:ideals}
\subsection{Polarization}
 \begin{definition}\label{def:polarization}
Let $\ab=(a_1,\dots,a_n)$ and $\mu=(\mu_1,\dots,\mu_n)$ be two multi-indices in $\NN^n$, where $\mu_i\leq a_i$ for all $i$. The polarization of $\mu$ in $\NN^{a_1+\cdots+a_n}$ is the multi-index $$\Pc(\mu)=(\underbrace{1,\dots,1}_{\mu_1},\underbrace{0,\dots,0}_{a_1-\mu_1},\dots,\underbrace{1,\dots,1}_{\mu_n},\underbrace{0,\dots,0}_{a_n-\mu_n}).$$ The {\em polarization of} $\xb^\mu=x_1^{\mu_1}\cdots x_n^{\mu_n}\in R=\kb[x_1\dots,x_n],$ with respect to $\ab$ is the squarefree monomial $\Pc(\xb^\mu)=\xb^{\Pc(\mu)}=x_{1,1}\cdots x_{1,\mu_1}\cdots x_{n,1}\cdots x_{n,\mu_n}$ in the polynomial ring $\Pc(R)=\kb[x_{1,1},\dots,x_{1,a_1},\dots,x_{n,1},\dots,x_{n,a_n}]$ in $n'=\sum_{i=1}^na_i$ variables. For ease of notation we used $\xb$ with two different meanings.

Let $I=\langle \xb^{m_1},\dots,\xb^{m_r}\rangle\subseteq R$ be a monomial ideal and $\xb^{\mu_I}=\lcm(\xb^{m_1},\dots,\xb^{m_r})$.
The {\em polarization of $I$}, $\Pc(I)$, is the monomial ideal in $\Pc(R)$ given by $\Pc(I)=\langle \Pc(\xb^{m_1}),\dots,\Pc(\xb^{m_r})\rangle$, where $\Pc(\xb^{m_i})$ is the polarization of $\xb^{m_i}$ with respect to $\mu_I$. %In the rest of the paper, all polarizations will be considered to be with respect to the least common multiple of the set of generators of some (given) monomial ideal.
\end{definition}

The polarization operation has been widely studied, mainly because of the features that a monomial ideal $I$ and its polarization $\Pc(I)$ share, see e.g. \cite{F05,HH11,V15,AV22}. %We say that two ideals are {\em copolar} if their polarizations are isomorphic. Copoloarity is an equivalence relation in the set of monomial ideals in a given polynomial ring.
The following proposition gives a list of some important copolar properties (i.e. shared by a monomial ideal and its polarization), see also \cite{F05,BT09,MMVV19,AFL22}.

\begin{proposition}[Corollary 1.6.3 in \cite{HH11}]\label{prop:copolarProperties}
Let $I\subseteq R=\kb[x_1,\dots,x_n]$ be a monomial ideal and let $\Pc(I)\subseteq \Pc(R)$ be its polarization. Then
\begin{enumerate}
\item $\beta_{i,\mu}(I)=\beta_{i,\Pc(\mu)}(\Pc(I))$ for all $i\in\ZZ$ and $\mu\in\NN^n$
\item Let $H_I(t)$ denote the Hilbert function of $I$, then $H_{I}(t)=(1-t)^\delta H_{\Pc(I)}(t)$ where $\delta=\dim \Pc(R)-\dim R$
\item $\height(I)=\height(\Pc(I))$
\item $\projdim(I)=\projdim(\Pc(I))$ and $\reg(I)=\reg(\Pc(I))$, where $\projdim$ and $\reg$ are the projective dimension and Castelnuovo-Mumfrod regularity, respectively.
\item $I$ is Cohen-Macaulay (resp. Gorenstein) if and only if $\Pc(I)$ is Cohen-Macaulay (resp. Gorenstein).
\end{enumerate}
\end{proposition}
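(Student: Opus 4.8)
The plan is to reduce all five statements to a single structural fact: that $\Pc(I)$ is a flat deformation of $I$ obtained by killing a regular sequence of linear forms. Write $S=\Pc(R)$, and for $i\in[n]$ let $a_i=(\mu_I)_i$ be the exponent to which $x_i$ is polarized, so that $\delta=\dim S-\dim R=\sum_{i}(a_i-1)$. Consider the linear forms $\theta_{i,j}=x_{i,j}-x_{i,j-1}$ for $i\in[n]$ and $2\le j\le a_i$. The key lemma I would establish is that the $\theta_{i,j}$ form a regular sequence on $S/\Pc(I)$ and that the specialization $x_{i,j}\mapsto x_i$ induces an isomorphism
\[
S/\bigl(\Pc(I)+(\theta_{i,j}\mid i,j)\bigr)\;\cong\; R/I.
\]
Once this is in hand, each of (1)--(5) is a standard consequence of ``quotienting by a regular sequence of $\delta$ linear forms.''

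The heart of the argument — and the step I expect to be the main obstacle — is the regular sequence claim. I would prove it one variable at a time, factoring the full polarization as a composition of elementary steps in which a single variable $x_i$ with current top exponent $e$ is split by introducing one new variable $w$ and replacing the relevant powers. After such a step the partial polarization $J$ is again a monomial ideal, so all its associated primes are monomial primes, i.e.\ generated by subsets of the variables. A monomial prime $P$ contains $x_i-w$ if and only if it contains both $x_i$ and $w$; hence it suffices to check that no associated prime of $J$ contains both the split variable $x_i$ and the freshly introduced $w$. This is precisely the combinatorial content of the construction: $w$ is introduced only to absorb the ``second copy'' of $x_i$, so in every minimal generator it appears exactly where a higher power of $x_i$ did, and the two never force each other into a common associated prime. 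Verifying this disjointness — and hence that $x_i-w$ is a nonzerodivisor on the one-variable-larger quotient — is the technical core; the isomorphism is then immediate, since setting $w=x_i$ undoes the replacement. Iterating recovers the lemma.

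Granting the lemma, I would derive (1) as follows. Since each $\theta_{i,j}$ is a linear nonzerodivisor, tensoring the minimal $\NN^{n'}$-graded free resolution of $S/\Pc(I)$ with $S/(\theta_{i,j}\mid i,j)$ yields a minimal resolution of the quotient, so the multigraded Betti numbers are unchanged; tracking the grading under the specialization $\Pc(\mu)\mapsto\mu$ gives $\beta_{i,\mu}(I)=\beta_{i,\Pc(\mu)}(\Pc(I))$ for all $i$ and $\mu$. Statement (4) is then immediate from (1): $\projdim$ is the top nonvanishing homological degree of the Betti table, and $\reg=\max\{\,|\nu|-i:\beta_{i,\nu}\neq0\,\}$; since polarization preserves total degrees (note $|\Pc(\mu)|=|\mu|$), both invariants are read off matching Betti data.

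For (2), applying the elementary quotients to $M=S/\Pc(I)$ gives short exact sequences $0\to M(-1)\xrightarrow{\theta}M\to M/\theta M\to 0$ with $\theta$ of degree $1$, multiplying the Hilbert series by $(1-t)$ at each of the $\delta$ steps; hence $H_{R/I}(t)=(1-t)^\delta H_{\Pc(R)/\Pc(I)}(t)$, and since $n'=n+\delta$ this is equivalent, after subtracting from the Hilbert series of the ambient rings, to the stated identity $H_I(t)=(1-t)^\delta H_{\Pc(I)}(t)$. For (3), quotienting by a regular sequence of length $\delta$ drops Krull dimension by $\delta$, so $\dim S/\Pc(I)=\dim R/I+\delta$ while $\dim S=\dim R+\delta$; subtracting yields $\height\Pc(I)=\height I$. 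Finally, (5) follows because depth and the Cohen--Macaulay (resp.\ Gorenstein) property transfer in both directions between a module and its quotient by a nonzerodivisor, so $S/\Pc(I)$ is Cohen--Macaulay (resp.\ Gorenstein) exactly when $R/I$ is.
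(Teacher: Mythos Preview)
Your approach is correct and is essentially the classical regular-sequence argument from Herzog--Hibi, which is exactly what the paper cites: this proposition is stated as Corollary~1.6.3 of \cite{HH11} and is not proved in the paper itself. Your outline mirrors that standard proof, including the honest acknowledgment that the verification ``no associated prime of the one-step polarization contains both $x_i$ and the new variable $w$'' is the technical core; that is indeed where the work lies in \cite{HH11}, and your sketch of why it should hold is on the right track though not a complete argument.

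Where the paper diverges is that, after Proposition~\ref{prop:expandedIsKoszul}, it remarks that Propositions~\ref{prop:expandedBetti} and~\ref{prop:expandedIsKoszul} together with Hochster's formula furnish an \emph{alternative} proof of items~(1) and~(4). That route is genuinely different from yours: rather than passing through a regular sequence, one shows by explicit simplicial collapses that the Koszul complex $K^{\mu_I}_I$ is a deformation retract of an ``expanded'' complex $EK^{\mu_I}_I$, and then identifies the latter with $K^{\Pc(\mu_I)}_{\Pc(I)}$. Hochster's formula converts the homology equality into the Betti-number equality. Your argument is more uniform---it handles all five items at once and makes the Hilbert-series, height, and depth relations transparent---whereas the paper's simplicial argument only reaches~(1) and~(4) but supplies explicit topological content: one sees $K^{\mu_I}_I$ sitting inside $K^{\Pc(\mu_I)}_{\Pc(I)}$ as a subcomplex to which the larger one collapses, which is the paper's actual objective.
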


\subsection{Depolarization}\label{sec:depolarization}
The polarization of a monomial ideal is unique, but the converse operation, {\em depolarization}, is not. Polarization takes a general monomial ideal $I$ and produces a squarefree monomial ideal, $\Pc(I)$, so that we can use topological and combinatorial techniques to study $I$, which are often more accessible. Conversely, depolarization takes a squarefree monomial ideal $J$ and gives a family of monomial ideals with the main homological features of $J$, but in rings with smaller number of variables. When applied to simplicial complexes, this allows to study them via complexes in less vertices. %In this section we explore this path.
All notations and definitions are taken from \cite{MPSW20,PS22}.

\begin{definition}
Let $R,S$ and $T$ be polynomial rings over a field $\kb$. Let $I\subseteq R$ be a squarefree monomial ideal. A {\em depolarization of $I$} is a monomial ideal $J\subseteq S$ such that $I$ is isomorphic to $\Pc(J)\subseteq T\cong R$, i.e., there is a bijective map $\varphi$ from the variables of $R$ to the variables of $T$ such that $\varphi(G(I))=G(\Pc(J))$.
%,  where $G(\Pc(J))$ is the unique minimal monomial generating set of $\Pc(J)$. %Note that, in particular, the rings $R$ and $T$ above have the same number of variables.
%and there is an explicit one-to-one correspondence between the variables of $R$ and $T$ such that $I$ and $\overline{J}$ are isomorphic.
\end{definition}

%Let $R=\kb[x_1,\dots,x_n]$ be a polynomial ring in $n$ variables. For any monomial $M$ of $R$ the \emph{support} of $m$, denoted by $\supp(m)$, is defined as the set of indices of variables which divide $m$.

%We define the \emph{support of a monomial ideal} of $I\subset R$ as $\supp(I)=\cup_{m\in G(I)}\supp(m)$, where $G(I)$ is the unique minimal monomial generating set of $I$. We say that the monomial ideal $I$ has \emph{full support} if $\supp(I)=\{1,\dots,n\}=[n]$. We will assume that ideals have full support, unless otherwise stated.

In order to list the elements of the set of depolarizations of a given squarefree monomial ideal and describe a structure for this set, we introduce the following concepts.

\begin{definition}
    %Let $I$ be a squarefree monomial ideal with minimal generating set $G(I)=\{m_1,\dots,m_r\}$. For each $i\in\supp(I)$ we define  $C_i\subseteq\supp(I)$ as \[C_i=\bigcap_{m\in G(I)\atop x_i\text{ divides }m}\supp(m).\]
    Let $I \subseteq R = \kb[x_{1},\dots,x_{n}]$ be a squarefree monomial ideal with minimal generating set $G(I)$. For each $i\in\supp(I)$ we define  $C_i\subseteq\supp(I)$ as \[C_i=\bigcap_{\xb^{\mu}\in G(I)\atop x_i\text{ divides }\xb^{\mu}}\supp(\xb^{\mu}).\]
    In other words, $C_i$ is given by the indices of all the variables that appear in every monomial generator of $I$ in which $x_i$ is present. Let $C_I=\{C_1,\dots,C_n\}$. The poset on the elements of $C_I$ ordered by inclusion is called the \emph{support poset of $I$}, $\supp\text{Pos}(I)$. We define the \emph{support poset of $I$}, where $I$ is a general monomial ideal, as the support poset of its polarization.
The support poset of any monomial ideal $I$, together with a given ordering $<$ on the variables $x_1,\dots,x_n$ induces a \emph{partial order $\prec$} on the set of variables as follows: $x_i\prec x_j$ if $C_i\subset C_j$ or if $C_i=C_j$ and $x_i<x_j$. This defines the \emph{$<$-support poset of $I$}. %If $C_i\neq C_j$ for every $i\neq j$, then $\supp\text{Pos}(I)$ is equal to the \emph{$<$-support poset of $I$} for any order $<$.
\end{definition}
From the $<$-support poset of $I$ we obtain the set of depolarizations of $\Pc(I)$: each disjoint chain partition of the $<$-support poset gives a depolarization of $\Pc(I)$, see \cite{MPSW20}, Proposition 3.5 and Theorem 3.7.
The set of depolarizations of a squarefree monomial ideal forms a poset with the order given by refinement of the corresponding chain partition.
%Let $P$ and $P'$ be two chain partitions of a given poset. We say that $P$ is a \emph{refinement} of $P'$ if for every chain $C$ in $P$, there is a chain $C'$ in $P'$ such that $C'\subset C$. Note that the set of all chain partitions of a given poset sorted by refinement form a poset.

\begin{definition}
Let $I\subset R=\kb[x_1,\dots,x_n]$ be a squarefree monomial ideal and let $J$ and $J'$ be two depolarizations of $I$. We say that $J\leq J'$ if the chain partition giving rise to $J$ is a refinement of the one corresponding to $J'$. Using this ordering, a collection of ideals that are depolarizations of a given squarefree monomial ideal $I$ forms a poset in which $I$ is the unique minimal element. We call this the \emph{depolarization poset of $I$}, $\mathcal{D}\text{P}(I)$.
The depolarization poset of a general monomial ideal $J$ is defined as the depolarization poset of its polarization $\Pc(J)$, i.e., $\mathcal{D}\text{P}(J):=\mathcal{D}\text{P}(\Pc(J))$.
\end{definition}

\begin{example}\label{Example:DePolarization}
    Let $J=\langle x^3y,yz^3,x^2y^3z^2t,z^3t\rangle\subset\kb[x,y,z,t]$ be a monomial ideal. Its polarization is 
    \begin{multline*}
        \mathcal{P}(J)=\langle x_1x_2x_3y_1,y_1z_1z_2z_3,x_1x_2y_1y_2y_3z_1z_2t_1,z_1z_2z_3t_1\rangle\\
        \subset\kb[x_1,x_2,x_3,y_1,y_2,y_3,z_1,z_2,z_3,t_1].
    \end{multline*}
    Observe that the polarization of $J$, $\mathcal{P}(J)$, is a monomial ideal in a polynomial ring of $10$ variables, while $J$ lays in a polynomial ring of just $4$ variables. 
     For ease notation we will work with 
    \[
        I=\langle x_1x_2x_3x_4, x_4x_7x_8x_9, x_1x_2x_4x_5x_6x_7x_8x_{10}, x_7x_8x_9x_{10}\rangle\subset\kb[x_1,\dots,x_{10}],
    \] 
    which is isomoprphic to $\Pc(J)$.

    The support poset $(C_I,\subset)$ is formed by the following sets 
    \begin{align*}
        C_1&=C_2=\{1,2,4\},C_3=\{1,2,3,4\}, C_4=\{4\},\\
        C_5&=C_6=\{1,2,4,5,6,7,8,10\},\\
        C_7&=C_8=\{7,8\}, C_9=\{7,8,9\}\text{ and }C_{10}=\{7,8,10\}.
    \end{align*}

    The Hasse diagram of the support poset of $I$ is shown in Figure \ref{fig:DePolarizationExPoset}.  Using the order in the variables in which $x_2>x_1$, $x_6>x_5$ and $x_8>x_7$ we obtain the $<$-support poset of $I$, which is presented in Figure \ref{fig:DePolarizationExOrderedPoset}.
    
     From this $<$-support poset, we can obtain different depolarizations of $I$. For example, using the partition $ P_{I_1}=\{\{1,2,3,4\},\{5,6,7,8,10\},\{9\}\}$ we obtain the depolarization $I_1=\langle x^4,xy^2z,x^3y^5,y^3z\rangle\subset\kb[x,y,z]$ . Refining this partition to the one given by $P_{I_2}=\{\{1,2,3,4\},\{5,6,10\},\{7,8\},\{9\}\}$ we obtain the depolarization $I_2=\langle x^4,xz^2t,x^3y^3z^2,yz^2t\rangle\subset\kb[x,y,z,t]$.
    % \begin{figure}[h]
    %     \centering
    %         \begin{tikzpicture}[scale=0.75, transform shape, vertices/.style={text width= 1.5em,align=center,draw=black, fill=white, circle, inner sep=1pt}]
             
    %          \node [vertices] (A) at (2,0){4};
    %          \node [vertices] (B) at (5,0){7,8};
    %          \node [vertices] (C) at (2,1.5){1,2};
    %          \node [vertices] (D) at (1,3){3};
    %          \node [vertices] (E) at (3,3){5,6};
    %          \node [vertices] (F) at (4,1.5){10};
    %          \node [vertices] (G) at (6,1.5){9};
             
    %         \draw [-] (A)--(C);
    %         \draw [-] (C)--(D);
    %         \draw [-] (C)--(E);

    %         \draw [-] (B)--(F);
    %         \draw [-] (B)--(G);
    %         \draw [-] (F)--(E);

    %         \end{tikzpicture}
    %     \caption{Support poset of the monomial ideal $I$ from Example \ref{Example:DePolarization}}
    %     \label{fig:DePolarizationExPoset}
    % \end{figure}

    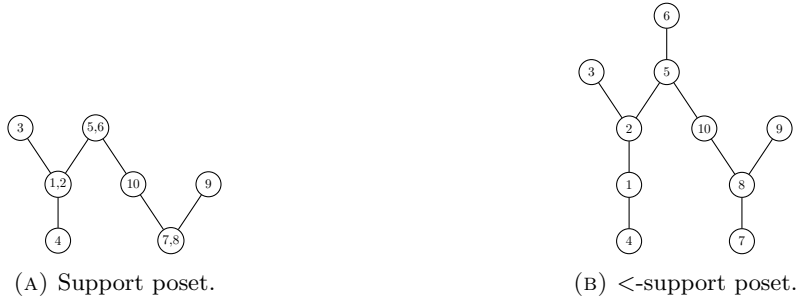
\begin{figure}
     \centering
         \begin{subfigure}[b]{0.40\textwidth}
             \centering
                 \begin{tikzpicture}[scale=0.5, transform shape, vertices/.style={text width= 1.5em,align=center,draw=black, fill=white, circle, inner sep=1pt}]
             
                     \node [vertices] (A) at (2,0){4};
                     \node [vertices] (B) at (5,0){7,8};
                     \node [vertices] (C) at (2,1.5){1,2};
                     \node [vertices] (D) at (1,3){3};
                     \node [vertices] (E) at (3,3){5,6};
                     \node [vertices] (F) at (4,1.5){10};
                     \node [vertices] (G) at (6,1.5){9};
                     
                    \draw [-] (A)--(C);
                    \draw [-] (C)--(D);
                    \draw [-] (C)--(E);
        
                    \draw [-] (B)--(F);
                    \draw [-] (B)--(G);
                    \draw [-] (F)--(E);
    
                \end{tikzpicture}
             \caption{Support poset.}
             \label{fig:DePolarizationExPoset}
         \end{subfigure}
        \hfill
         \begin{subfigure}[b]{0.40\textwidth}
             \centering
              \begin{tikzpicture}[scale=0.5, transform shape, vertices/.style={text width= 1.5em,align=center,draw=black, fill=white, circle, inner sep=1pt}]
             
                 \node [vertices] (A) at (2,0){4};
                 \node [vertices] (B) at (5,0){7};
                 \node [vertices] (B1) at (5,1.5) {8};
                 \node [vertices] (C) at (2,1.5){1};
                 \node [vertices] (C1) at (2,3){2};
                 \node [vertices] (D) at (1,4.5){3};
                 \node [vertices] (E) at (3,4.5){5};
                 \node [vertices] (E1) at (3,6){6};
                 \node [vertices] (F) at (4,3){10};
                 \node [vertices] (G) at (6,3){9};
                 
                \draw [-] (A)--(C);
                \draw [-] (C)--(C1);
                \draw [-] (C1)--(D);
                \draw [-] (C1)--(E);
                \draw [-] (E)--(E1);
    
                \draw [-] (B)--(B1);
                \draw [-] (B1)--(F);
                \draw [-] (B1)--(G);
                \draw [-] (F)--(E);

            \end{tikzpicture}
            \caption{$<$-support poset.}
            \label{fig:DePolarizationExOrderedPoset}
         \end{subfigure}
    \caption{Support poset and $<$-support poset of the monomial ideal $I$ in Example \ref{Example:DePolarization}.}
    \label{fig:exampleDepolarization}
\end{figure}

    % \begin{figure}[h]
    %     \centering
    %         \begin{tikzpicture}[scale=0.75, transform shape, vertices/.style={text width= 1.5em,align=center,draw=black, fill=white, circle, inner sep=1pt}]
             
    %          \node [vertices] (A) at (2,0){4};
    %          \node [vertices] (B) at (5,0){7};
    %          \node [vertices] (B1) at (5,1.5) {8};
    %          \node [vertices] (C) at (2,1.5){1};
    %          \node [vertices] (C1) at (2,3){2};
    %          \node [vertices] (D) at (1,4.5){3};
    %          \node [vertices] (E) at (3,4.5){5};
    %          \node [vertices] (E1) at (3,6){6};
    %          \node [vertices] (F) at (4,3){10};
    %          \node [vertices] (G) at (6,3){9};
             
    %         \draw [-] (A)--(C);
    %         \draw [-] (C)--(C1);
    %         \draw [-] (C1)--(D);
    %         \draw [-] (C1)--(E);
    %         \draw [-] (E)--(E1);

    %         \draw [-] (B)--(B1);
    %         \draw [-] (B1)--(F);
    %         \draw [-] (B1)--(G);
    %         \draw [-] (F)--(E);

    %         \end{tikzpicture}
    %     \caption{$>$-support poset of the monomial ideal $I$ from Example \ref{Example:DePolarization}}
    %     \label{fig:DePolarizationExOrderedPoset}
    % \end{figure} 
\end{example}

\section{Polarization and depolarization of simplicial complexes}\label{sec:complexes}

\subsection{Polarization of Koszul simplicial complexes}\label{sec:ploarizationKoszul}\ \\ 
Let $I\subseteq R=\kb[x_1,\dots,x_n]$ be a monomial ideal and $G(I)=\{\xb^{m_1},\dots,\xb^{m_r}\}$ its minimal monomial generating set. Let $\xb^{\mu_I}$ and $\xb^{\nu_I}$ be the least common multiple and greatest common divisor of all the monomials in $G(I)$. If $I$ is clear from the context we will denote then simply by $\xb^\mu$ and $\xb^\nu$. Observe that $\xb^\nu\mid \xb^\mu$ and that equality holds only if $I$ is generated by one single monomial. We set $\ms(I)=\frac{\xb^{\mu}}{\xb^\nu}$ and call it the {\em monomial span} (or simply {\em span}) of $I$.

Consider now $K^\mu_I$, the Koszul simplicial complex of $I$ at $\mu$ and construct the complex $EK^\mu_I$, called the {\em expanded Koszul complex of $I$ at $\mu$} using the following steps:

\begin{enumerate}
    \item[$1$.]{For each $i\in[n]$ consider the set of vertices $E_i=\{x_{i,1},\dots,x_{i,\ms(I)_i}\}$. Within each set $E_i$ we consider the vertices ordered increasingly according to the second subindex. The set of vertices of $EK^\mu_I$ is $EV=\bigcup_{i}E_i$.}
    \item[$2$.]{For each $\sigma\in K^\mu_I$, let $\xb^{\alpha(\sigma)}$ be each of the biggest monomial divisors of $\ms(I)$ (w.r.t divisibility) such that $\supp(\xb^{\alpha(\sigma)})=\sigma$} and $\frac{\xb^\mu}{\xb^{\alpha(\sigma)}}\in I$. We add to $EK^\mu_I$ a face given by the set $\bigcup_{i\in\sigma}\{x_{i,\ms(I)_i-\alpha(\sigma)_i+1},\dots,x_{i,\ms(I)_i}\}$, i.e., the simplex spanned by the last $\alpha(\sigma)_i$ vertices in $E_i$ for each $i\in\sigma$.
\end{enumerate}

\noindent Alternatively, the second step of this construction can be replaced by
\begin{enumerate}
    \item[$2^\prime$.] For each $m \in G(I)$, let $\xb^{\alpha} = \frac{\xb^{\mu}}{m}$ and $\sigma = \supp(\xb^{\alpha})$. We add to $EK^\mu_I$ a facet given by the set $\bigcup_{i\in\sigma}\{x_{i,\ms(I)_i-\alpha_i+1},\dots,x_{i,\ms(I)_i}\}$, i.e., the simplex spanned by the last $\alpha_i$ vertices in $E_i$ for each $i$.
\end{enumerate}

Using (2) to describe the facets of $EK^\mu_I$ stresses the parallelism with the definition of the Koszul complex, whereas using (2') relates directly to the monomial generators of $I$.

\begin{example}\label{ex:expansion}
     Let $I=\langle x_1^3x_2^2,x_1^2x_2^3,x_1^2x_3,x_2^2x_3\rangle\subseteq \kb[x_1,x_2,x_3]$. For this ideal $\xb^\mu=x_1^3x_2^3x_3$, $\xb^\nu=1_R$ and $\ms(I)=x_1^3x_2^3x_3$.
     Observe that the facets of $K^\mu_I$ are
     \[
     \Fc \left( K^\mu_I \right)=\{\{x_1,x_2\},\{x_1,x_3\},\{x_2,x_3\}\}.
     \]
     
     The vertex set for $EK^\mu_I$ is $\{x_{1,1},x_{1,2},x_{1,3},x_{2,1},x_{2,2},x_{2,3},x_{3,1}\}$, and the simplicial complex is given by the following set of facets (see Figure \ref{fig:exampleExpansion})
     \[
     EK^\mu_I\!=\!\{\{x_{1,1},x_{1,2},x_{1,3},x_{2,3}\},\{x_{1,3},x_{2,1},x_{2,2},x_{2,3}\},\{x_{1,3},x_{3,1}\},\{x_{2,3},x_{3,1}\}\}.
     \]

\begin{figure}[h]
\centering
\begin{minipage}{.40\textwidth}
  \centering
  \includegraphics[width=.5\linewidth]{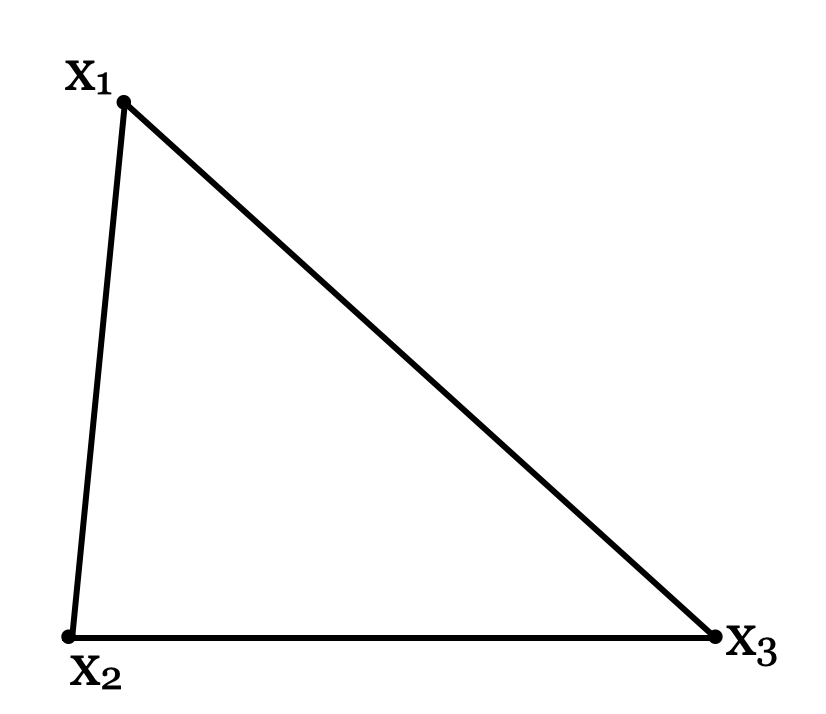}
  
\end{minipage}%
\begin{minipage}{.60\textwidth}
  \centering
  \includegraphics[width=0.7\linewidth]{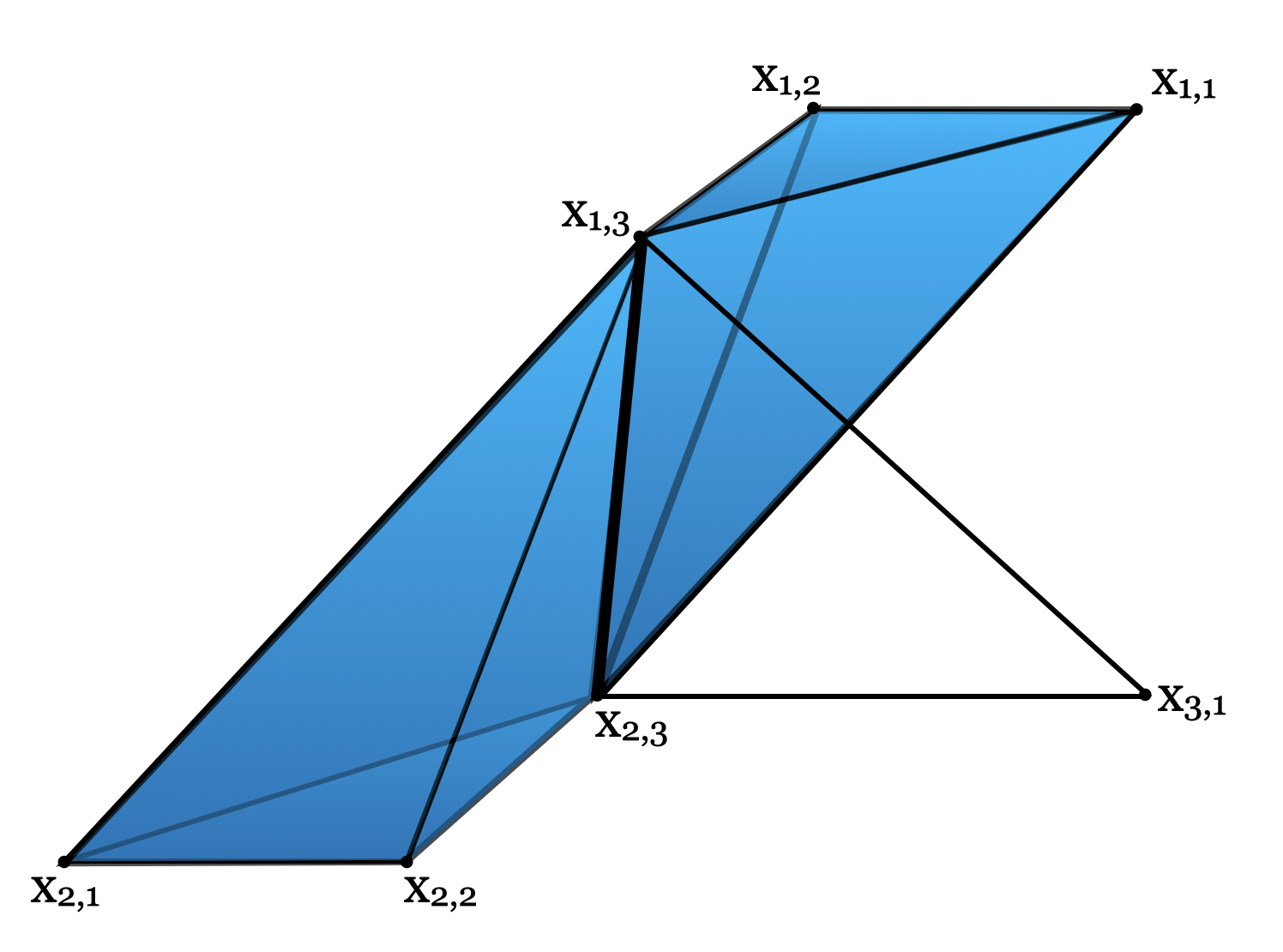}
  
\end{minipage}
\caption{A simplicial complex (left) and its expansion, from Example \ref{ex:expansion}.}
\label{fig:exampleExpansion}
\end{figure}
\end{example}

% \begin{example}
%     Let $I=\langle x_1^4x_2x_3,x_1x_2^3x_3,x_1x_2x_3^2\rangle\subseteq \kb[x_1,x_2,x_3]$. For this ideal $\mu=x_1^4x_2^3x_3^2$, $\nu=x_1x_2x_3$ and $\ms(I)=x_1^3x_2^2x_3$.
%     Observe that
%     \[
%     K^\mu_I=\{\{x_1,x_2\},\{x_1,x_3\},\{x_2,x_3\}\},
%     \]
%     given by its list of facets.
%     
%     The vertex set for $EK^\mu_I$ is $\{x_{1,1},x_{1,2},x_{1,3},x_{2,1},x_{2,2},x_{3,1}\}$, and the simplicial complex is given by the following set of facets
%     \[
%     EK^\mu_I=\{\{x_{1,1},x_{1,2},x_{1,3},x_{2,1},x_{2,2}\},\{x_{1,1},x_{1,2},x_{1,3},x_{3,1}\},\{x_{2,1},x_{2,2},x_{3,1}\}\}.
%     \]
% \end{example}

\begin{proposition}\label{prop:expandedBetti}
Let $I\subseteq R=\kb[x_1,\dots,x_n]$ be a monomial ideal. Then
\[
\dim_\kb\widetilde{H}_i(K^\mu_I;\kb)=\dim_\kb\widetilde{H}_i(EK^\mu_I;\kb),\mbox{ for all }i\in\ZZ.
\]
\end{proposition}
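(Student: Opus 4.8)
The plan is to prove the stronger statement that $EK^\mu_I$ and $K^\mu_I$ are homotopy equivalent, by exhibiting $K^\mu_I$ (up to relabeling of vertices) as a subcomplex onto which $EK^\mu_I$ collapses; since a collapse is in particular a homotopy equivalence, it preserves reduced homology in every degree. The starting point is a structural observation about the faces of $EK^\mu_I$. By construction (step $2'$), every facet arises from a generator $m\in G(I)$ and meets each block $E_i=\{x_{i,1},\dots,x_{i,\ms(I)_i}\}$ in a \emph{suffix} $\{x_{i,\ms(I)_i-\alpha_i+1},\dots,x_{i,\ms(I)_i}\}$, where $\alpha=\frac{\xb^\mu}{m}$. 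In particular, whenever a facet contains some $x_{i,j}$ with $j<\ms(I)_i$, it also contains $x_{i,j+1},\dots,x_{i,\ms(I)_i}$. The ``top'' vertices $x_{i,\ms(I)_i}$ will play the role of the vertices of $K^\mu_I$, while the remaining \emph{auxiliary} vertices $x_{i,j}$ with $j<\ms(I)_i$ are the ones to be collapsed away.

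The key step is that each auxiliary vertex has a cone in its link. Fix $i$ and $j<\ms(I)_i$ with $x_{i,j}$ a vertex of the current complex, and set $w=x_{i,j+1}$. I would first check that $\{x_{i,j},w\}$ is a face, and then that for every $\eta\in\mathrm{lk}(x_{i,j})$ one has $\eta\cup\{w\}\in\mathrm{lk}(x_{i,j})$: indeed $\eta\cup\{x_{i,j}\}$ lies in some facet $F$, which by the suffix property also contains $w$, whence $\eta\cup\{x_{i,j},w\}\subseteq F$ is a face. Thus $\mathrm{lk}(x_{i,j})$ is a cone with apex $w$, and by the standard fact that a vertex whose link is a cone (hence collapsible) can be collapsed, the complex collapses onto the subcomplex of faces not containing $x_{i,j}$. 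I would then order the auxiliary vertices block by block, from $j=1$ upward within each block, and remove them one at a time. It only remains to note that the cone property is inherited after previous deletions (removing faces cannot destroy the implication ``$\eta\in\mathrm{lk}(x_{i,j})\Rightarrow\eta\cup\{w\}\in\mathrm{lk}(x_{i,j})$'', and $w$ is never deleted before $x_{i,j}$), so the collapses can be performed in sequence.

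After all auxiliary vertices are collapsed, the surviving complex $T$ consists exactly of those faces of $EK^\mu_I$ contained in the set of top vertices $\{x_{i,\ms(I)_i}\mid i\in[n]\}$. The final step is the identification $T\cong K^\mu_I$ under $x_{i,\ms(I)_i}\mapsto i$. For this I would show that $\{x_{i,\ms(I)_i}\mid i\in\sigma\}$ is a face of $EK^\mu_I$ if and only if $\sigma\in K^\mu_I$: writing $\sigma_m=\supp(\frac{\xb^\mu}{m})$, the top vertex $x_{i,\ms(I)_i}$ lies in the facet coming from $m$ precisely when $i\in\sigma_m$, and a short divisibility computation shows that $\sigma\subseteq\sigma_m$ for some $m\in G(I)$ exactly when $\frac{\xb^\mu}{\xb_\sigma}\in I$, i.e.\ when $\sigma\in K^\mu_I$. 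I expect the main obstacle to be bookkeeping rather than conceptual: one must make sure the apex $x_{i,j+1}$ genuinely survives each preceding collapse (which is what forces the bottom-to-top ordering) and that the residual complex is \emph{precisely} $K^\mu_I$ and not merely homotopy equivalent to it. A shorter but less self-contained alternative, once the identity $EK^\mu_I=K^\1_{\Pc(I)}$ is available, would be to combine the invariance of Betti numbers under polarization with Hochster's formula (Theorem \ref{th:Hochster}); I would nonetheless prefer the collapse argument, since it yields the sharper homotopy-theoretic conclusion directly.
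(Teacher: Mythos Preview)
Your proposal is correct and follows essentially the same strategy as the paper: both identify the subcomplex on the top vertices $\{x_{i,\ms(I)_i}\}$ with $K^\mu_I$ (the paper calls it $maxEK^\mu_I$) and then collapse $EK^\mu_I$ onto it by removing the auxiliary vertices block by block, from smallest second index upward. The only cosmetic difference is in how the collapse is packaged: the paper writes the faces meeting each $E_i$ as a union of joins $\bigcup_j\langle\sigma_{i,j}\rangle * A_j$ and invokes compatibility of the join with collapses, whereas you argue one vertex at a time via the equivalent link-cone (domination) criterion.
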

\begin{proof}
Let $\sigma \in K^{\mu}_{I}$. Then $\frac{\xb^{\mu}}{\xb_\sigma} \in I$ and $ \xb_\sigma \;| \; \xb^{\alpha(\sigma)}$, hence the face given by $\bigcup_{i\in\sigma}\{x_{i,\ms(I)_i}\}$ is in $EK^{\mu}_{I}$. The reciprocal is also true; that is, there is a one-to-one correspondence between the faces $\sigma \in K_{I}^{\mu}$ and those faces of $EK_{I}^{\mu}$ of the form $\bigcup_{i\in\sigma}\{x_{i,\ms(I)_i}\}$. This means that inside $EK_{I}^{\mu}$ there is a subcomplex isomorphic to $K_{I}^{\mu}$, let's denote it by $maxEK_{I}^{\mu}$, for its vertex set is formed by the maximal vertex in each of the sets $E_i$. Our goal is to show that there is a sequence of elementary collapses from $EK_{I}^{\mu}$ to $maxEK_{I}^{\mu}\simeq K_{I}^{\mu}$.

%Let now $\tau$ be a facet of $EK_{I}^{\mu}$, then $\tau$ can be written as $\tau=\Delta_{i_1} * \cdots * \Delta_{i_s}$ for some $\{i_1,\dots,i_s\}\subseteq [n]$, where each $\Delta_{i_j}$ is a simplex such that $V(\Delta_{i_j}) \subseteq E_{i_j}$ and where $*$ denotes the join operator.

Let $i\in[n]$, for any face $\sigma$ in $EK_{I}^{\mu}$ containing at least one vertex in $E_i$ there is at least one face $\sigma'$ such that $\sigma\subseteq\sigma'$ and $x_{i,ms(I)_{i}}\in\sigma'$. Hence, we can write the union of all the faces containing at least one vertex of $E_{i}$ as $\bigcup_{j=1}^{ms(I)_{i}} \langle \sigma_{i,j}\rangle * A_{j}$, where $\sigma_{i,j} = \{x_{i,j},\dots,x_{i,ms(I)_{i}}\}$ and $V(A_{j}) \cap E_{i} = \emptyset, \; \forall j$ (note that $A_{j}$ can be $\emptyset$), and where $*$ denotes the join operator.

Consider $i=1$. Since any simplex can be collapsed to any of its vertices and $x_{1,ms(I)_1} \in \langle \sigma_{1,j}\rangle$ for all $j$, we can collapse these sets to $x_{1,ms(I)_1}$. This collapse is done stepwise following the second index of the vertices in ascending order until we reach the last one, $x_{1,ms(I)_1}$. Then by the compatibility of $*$ with respect to collapses (see e.g. \cite{S19} Problem 1.55), we have that $\bigcup_{j=1}^{ms(I)_{1}} \langle \sigma_{1,j}\rangle * A_{j} \searrow \langle\{x_{1,ms(I)_{1}}\}\rangle * \bigcup_{j=1}^{ms(I)_{1}}A_{j}$. The only surviving vertex of $E_1$ after this collapse is $x_{1,ms(I)_1}$, and every vertex of $E_j$, $j> 1$ remains. If we call $EK_{I}^{\mu}\setminus\{i\}$ to the result of applying the just described collapse of $E_i$ to $x_{i,ms(I)_i}$ then we have that $EK_{I}^{\mu}\searrow EK_{I}^{\mu}\setminus\{1\}$ (hence both have the same homotopy type), and that $maxEK_{I}^{\mu}\subseteq EK_{I}^{\mu}\setminus\{1\}$.

%Using the distributivity of $*$ over the union, we can write the union of all the faces containing at least one vertex of $E_{i}$ as $\bigcup_{j=1}^{ms(I)_{i}} \langle \sigma_{i,j}\rangle * A_{j}$, where $\sigma_{i,j} = \{x_{i,j},\dots,x_{i,ms(I)_{i}}\}$ and $V(A_{j}) \cap E_{i} = \emptyset, \; \forall j$ (note that $A_{j}$ can be $\emptyset$). 

%As any simplex can be collapsed to any of its faces and $x_{i,1} \in \langle \sigma_{i,j}\rangle$ if and only if $j=1$, then $\bigcup_{j=1}^{ms(I)_{i}} \langle \sigma_{i,j}\rangle * A_{j} \searrow \bigcup_{j=2}^{ms(I)_{i}} \langle \sigma_{i,j} \rangle *A_{j}'$ (see Problem 1.55, \cite{S19}), where $A_{2}' = A_{1} \cup A_{2}$ and $A_{j}' = A_{j}$ for all $j \neq 1$. Iterating this process, we obtain that $\bigcup_{j=1}^{ms(I)_{i}} \langle \sigma_{i,j}\rangle * A_{j} \searrow \langle\{x_{i,ms(I)_{i}}\}\rangle * \bigcup_{j=1}^{ms(I)_{i}}A_{j}$.

This course of action can be repeated for every $i=1,\dots,n$, obtaining a subcomplex $EK_{I}^{\mu}\setminus\{1,\dots,i\}$ such that $EK_{I}^{\mu}\searrow EK_{I}^{\mu}\setminus\{1,\dots,i\}$, and such that the only vertex in $E_j$ for $j=1,\dots,i$ is $x_{j,ms(I)_j}$, and such that $maxEK_{I}^{\mu}\subseteq EK_{I}^{\mu}\setminus\{1,\dots,i\}$. Finally, noting that $EK_{I}^{\mu}\setminus[n]=maxEK_{I}^{\mu}$ yields the result.

\end{proof}

\begin{proposition}\label{prop:expandedIsKoszul}
 Let $I\subseteq R$ be a monomial ideal and $\Pc(I)\subseteq\Pc(R)$ its polarization, then
 \[
 K^{\Pc(\mu_I)}_{\Pc(I)} \cong EK^{\mu_I}_I.
 \]
\end{proposition}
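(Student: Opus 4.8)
The plan is to prove the isomorphism directly at the level of facets, exhibiting an explicit order-preserving relabeling of vertices that carries the facets of one complex onto those of the other; since a simplicial complex is determined by its facets, this suffices. First I would use that $\Pc(\mu_I)=\1_{\Pc(R)}$, so that $K^{\Pc(\mu_I)}_{\Pc(I)}=K^\1_{\Pc(I)}$, and since $\Pc(I)$ is squarefree this is the Alexander dual $\Delta^\vee_{\Pc(I)}$. For a squarefree ideal $J$ with minimal generators $\xb_{\tau_1},\dots,\xb_{\tau_r}$, a set $\sigma$ lies in $K^\1_J$ iff $\xb^\1/\xb_\sigma\in J$, i.e.\ iff some $\tau_k$ is disjoint from $\sigma$; hence the facets of $K^\1_J$ are exactly the complements $\overline{\tau_k}$, which are pairwise incomparable precisely because the $\tau_k$ are (minimality of $G(J)$). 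Applying this to $\Pc(I)$, whose minimal generators are the polarizations $\Pc(\xb^{m_k})$ of the minimal generators $\xb^{m_k}$ of $I$ (polarization preserves minimality, as $\supp(\Pc(\xb^{m_k}))$ encodes the full exponent vector), and recalling that $\supp(\Pc(\xb^{m_k}))=\{x_{i,j}:1\leq j\leq (m_k)_i\}$ inside the variable set $\{x_{i,j}:1\leq j\leq (\mu_I)_i\}$ of $\Pc(R)$, I obtain that the facets of $K^\1_{\Pc(I)}$ are
\[
F_k=\{x_{i,j}: (m_k)_i<j\leq (\mu_I)_i\},\qquad k=1,\dots,r.
\]

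Next I would describe the facets of $EK^{\mu_I}_I$ using step $(2')$: the facet attached to $m_k$ is, writing $\alpha_i=(\mu_I)_i-(m_k)_i$, the set of the last $\alpha_i$ vertices of each $E_i$, namely $G_k=\{x_{i,j}:\ms(I)_i-\alpha_i<j\leq \ms(I)_i\}$. The crucial observation concerns the greatest common divisor $\xb^{\nu_I}$. Since $(\nu_I)_i=\min_k (m_k)_i$, every variable $x_{i,j}$ with $j\leq (\nu_I)_i$ divides every generator of $\Pc(I)$; such a vertex is a cone point and therefore belongs to no facet $F_k$ (each $F_k$ omits all $x_{i,j}$ with $j\leq (m_k)_i$, and $(m_k)_i\geq (\nu_I)_i$). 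Conversely, a vertex $x_{i,j}$ with $(\nu_I)_i<j\leq (\mu_I)_i$ does occur in $K^\1_{\Pc(I)}$, since the generator realizing the minimum $(m_{k_0})_i=(\nu_I)_i$ gives $x_{i,j}\in F_{k_0}$. Thus exactly $\ms(I)_i=(\mu_I)_i-(\nu_I)_i$ of the variables of $\Pc(R)$ above each $x_i$ are \emph{active}, matching $|E_i|$.

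This dictates the relabeling $\varphi(x_{i,j})=x_{i,\,j-(\nu_I)_i}$ on active vertices, which is an order-preserving bijection from the active vertices of $\Pc(R)$ onto the vertex set $EV=\bigcup_i E_i$ of $EK^{\mu_I}_I$. I would then verify $\varphi(F_k)=G_k$ by a direct index computation: under $\varphi$ the range $(m_k)_i<j\leq (\mu_I)_i$ becomes $(m_k)_i-(\nu_I)_i<s\leq \ms(I)_i$, while $\ms(I)_i-\alpha_i=(\mu_I)_i-(\nu_I)_i-\big((\mu_I)_i-(m_k)_i\big)=(m_k)_i-(\nu_I)_i$, so the two ranges coincide facet by facet. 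Since $\varphi$ is a bijection of (active) vertex sets carrying the facets $F_k$ onto the facets $G_k$, it induces the desired simplicial isomorphism $K^\1_{\Pc(I)}\cong EK^{\mu_I}_I$.

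The only genuine obstacle is the bookkeeping around the gcd: one must check that the ``bottom'' variables $x_{i,1},\dots,x_{i,(\nu_I)_i}$ of the polarization are exactly the vertices that never occur in $K^\1_{\Pc(I)}$, so that discarding them leaves precisely the $\ms(I)_i$ active vertices matched by $\varphi$; everything else reduces to the routine arithmetic verifying $\varphi(F_k)=G_k$. I would also record the degenerate principal case $\xb^{\mu_I}=\xb^{\nu_I}$, where $\ms(I)=1$ and both complexes reduce to $\{\emptyset\}$, so the statement holds trivially.
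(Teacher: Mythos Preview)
Your proof is correct and follows essentially the same approach as the paper: describe the facets of both complexes explicitly (using step $(2')$ for $EK^{\mu_I}_I$ and the complement-of-generators description for $K^\1_{\Pc(I)}$) and exhibit the shift-by-$\nu_I$ relabeling as the isomorphism. The paper's proof is much terser (it simply states the map $\varphi(x_{i,j})=x_{i,j+\nu_i}$ in the opposite direction and declares it induces an isomorphism), whereas you spell out the gcd/active-vertex bookkeeping and the index verification $\varphi(F_k)=G_k$ that the paper leaves implicit.
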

\begin{proof}
    Let $G(I)=\{\xb^{m_1},\dots,\xb^{m_r}\}$. Since $\Pc(\mu_I) = \1$, the facets of $K^{\Pc(\mu_I)}_{\Pc(I)}$ are $\{ \1 - \Pc(m_i) : i \in \{ 1, \ldots, r \}\}$. Taking into account steps $1$ and $2^\prime$ of the construction of $EK^{\mu_I}_I$, it is clear that $\varphi : V \left( EK^{\mu_I}_I \right) \rightarrow V \left( K^{\Pc(\mu_I)}_{\Pc(I)} \right)$, defined by $\varphi(x_{i,j}) = x_{i,j+\nu_i}$, induces an isomorphism between $EK^{\mu_I}_I$ and $K^{\Pc(\mu_I)}_{\Pc(I)}$.
\end{proof}

Propositions \ref{prop:expandedBetti} and \ref{prop:expandedIsKoszul}, together with Hochster's formula give a proof of item (1), and hence item (4), in Proposition \ref{prop:copolarProperties}, and also give an explicit construction of the relation between $K^{\mu_I}_I$ and $K^{\Pc(\mu_I)}_{\Pc(I)}$ as simplicial complexes.

\subsection{Depolarization of simplicial complexes}\ \\ 
The study of simplicial complexes, in particular with a focus on efficient computation of the dimension of their homology groups and related invariants such as Euler characteristic, has been approached, from the theory of monomial ideals \cite{HH11,V15,BHS19,RS13}, and from several reduction techniques with the aim of obtaining smaller simplicial complexes having the same homology than the one under study; we underline the notions of elementary collapse, introduced by Whitehead \cite{W39} (see also \cite{BG12}) and Discrete Morse Theory (both in a topological and algebraic formulation) \cite{F02,S19,BV02,S06}. The use of depolarization is a mix of these two approaches: it uses monomial ideals to reduce a given simplicial complex so that its homology can be more efficiently computed.

Let $\Delta$ be a simplicial complex on $n$ vertices. Let $IK_\Delta$ be the monomial ideal minimally generated by the complements of the facets of $\Delta$ in $[n]$, i.e.,
\[
IK_\Delta=\Big{\langle} \frac{\xb^\1}{\xb_\sigma}\mid\sigma\in\Delta\Big{\rangle}.
\]
Observe that $IK_\Delta=I_\Delta^\vee=I_{\Delta^\vee}$. %is the dual ideal of the Stanley-Reisner ideal of $\Delta$, or equivalently, the Stanley-Reisner ideal of $\Delta^\vee$.

 \begin{proposition}\label{prop:simplicialDepolarization}
    Let $\Dc(IK_\Delta)$ be a depolarization of $IK_\Delta$.
\[
\dim_\kb\widetilde{H}_i(\Delta;\kb)=\dim_\kb\widetilde{H}_i(K^\mu_{\Dc(IK_\Delta)};\kb)\mbox{ for all } i \in\ZZ.
\]
\end{proposition}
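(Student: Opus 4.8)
The plan is to identify $\Delta$ itself as a Koszul simplicial complex and then transport it across the depolarization using the two preceding propositions. Write $J=\Dc(IK_\Delta)$ and let $\mu_J$ denote the exponent of $\lcm(G(J))$, so that the right-hand side of the statement is $K^{\mu_J}_J$ (following the standing convention that $\mu=\mu_{\Dc(IK_\Delta)}$).

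First I would record the combinatorial identity $\Delta=K^\1_{IK_\Delta}$. Because $IK_\Delta=I_{\Delta^\vee}$ is squarefree, this follows at once from $K^\1_I=\Delta^\vee_I$ in the squarefree case together with the involutivity $(\Delta^\vee)^\vee=\Delta$. Directly: for $\sigma\subseteq[n]$ one has $\frac{\xb^\1}{\xb_\sigma}=\xb_{[n]\setminus\sigma}$, and since $IK_\Delta$ is generated by the complements $\xb_{[n]\setminus\tau}$ of the facets $\tau$ of $\Delta$, squarefree divisibility shows that $\xb_{[n]\setminus\sigma}\in IK_\Delta$ if and only if $\sigma\subseteq\tau$ for some facet $\tau$, i.e. if and only if $\sigma\in\Delta$.

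Next, by the definition of depolarization there is a variable-renaming bijection identifying $IK_\Delta$ with $\Pc(J)$. Such a bijection sends $\1$ to $\1$, and since $\Pc(\mu_J)=\1_{\Pc(S)}$, it induces an isomorphism of simplicial complexes $K^\1_{IK_\Delta}\cong K^{\Pc(\mu_J)}_{\Pc(J)}$. Applying Proposition \ref{prop:expandedIsKoszul} to $J$ gives $K^{\Pc(\mu_J)}_{\Pc(J)}\cong EK^{\mu_J}_J$, and Proposition \ref{prop:expandedBetti} applied to $J$ gives $\dim_\kb\widetilde{H}_i(EK^{\mu_J}_J;\kb)=\dim_\kb\widetilde{H}_i(K^{\mu_J}_J;\kb)$ for all $i$. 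Chaining these isomorphisms and equalities yields
\[
\dim_\kb\widetilde{H}_i(\Delta;\kb)=\dim_\kb\widetilde{H}_i\bigl(K^\1_{IK_\Delta};\kb\bigr)=\dim_\kb\widetilde{H}_i\bigl(EK^{\mu_J}_J;\kb\bigr)=\dim_\kb\widetilde{H}_i\bigl(K^{\mu_J}_J;\kb\bigr),
\]
which is the assertion.

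I expect the main difficulty to be the bookkeeping of multidegrees: one must be sure that the all-ones degree $\1$ on the squarefree side is genuinely the image of $\Pc(\mu_J)$, that is, that $\Pc(\mu_J)=\1_{\Pc(S)}$, and that the depolarization isomorphism respects this. A purely homological variant sidesteps $EK$ altogether: Theorem \ref{th:Hochster} gives $\dim_\kb\widetilde{H}_i(\Delta;\kb)=\beta_{i+1,\1}(IK_\Delta)$; invariance of Betti numbers under polarization (Proposition \ref{prop:copolarProperties}(1)), combined with $\Pc(\mu_J)=\1$ and $\Pc(J)\cong IK_\Delta$, yields $\beta_{i+1,\1}(IK_\Delta)=\beta_{i+1,\mu_J}(J)$, and the non-squarefree Hochster formula converts this back to $\dim_\kb\widetilde{H}_i(K^{\mu_J}_J;\kb)$. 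In that route the delicate point is instead the homological index shift by one.
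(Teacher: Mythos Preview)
Your proposal is correct. In fact, your ``purely homological variant'' is exactly the paper's proof: the authors write the one-line chain
\[
\dim_\kb\widetilde{H}_i(\Delta;\kb) = \beta_{i+1,\1}(IK_\Delta) = \beta_{i+1,\mu}(\Dc(IK_\Delta)) = \dim_\kb\widetilde{H}_i(K^\mu_{\Dc(IK_\Delta)};\kb),
\]
invoking $K^\1_{IK_\Delta}=\Delta$ together with Hochster's formula for the outer equalities and Betti-number invariance under depolarization for the middle one.

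Your primary route through $EK^{\mu_J}_J$ is a genuine alternative: rather than appealing to the copolar property of Betti numbers (Proposition~\ref{prop:copolarProperties}(1)) as a black box, you invoke Propositions~\ref{prop:expandedBetti} and~\ref{prop:expandedIsKoszul} directly. This is arguably more self-contained within the paper's own development, since the authors themselves remark that those two propositions \emph{re-prove} the Betti-number invariance. The trade-off is that your argument rests on the explicit simplicial collapse established in Proposition~\ref{prop:expandedBetti}, whereas the paper's proof only needs the statement that Betti numbers are preserved; on the other hand, your route never leaves the category of simplicial complexes, which fits the spirit of the section nicely.
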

\begin{proof}
For every $i\in\ZZ$ we have that
\[
\dim_\kb{{\widetilde{H}}_{i}(\Delta;\kb)} = \beta_{i+1,\1} (IK_\Delta) = \beta_{i+1,\mu} (\Dc(IK_\Delta)) = \dim_\kb{{\widetilde{H}}_{i}(K^\mu_{\Dc(IK_\Delta)};\kb)}.
\]

The first equality holds because $K^\1_{IK_\Delta} = \Delta$ and due to Hochster's formula (Theorem \ref{th:Hochster}). The seconds holds because depolarization keeps the Betti numbers of ideals. The third one is again due to Hochster's formula.
\end{proof}

Observe that if  the depolarization is not trivial, then $K^\mu_{\Dc(IK_\Delta)}$ is a simplicial complex in a strictly smaller set of vertices than $\Delta$.

\begin{example}
Let $\Delta$ be the following $4$-dimensional simplicial complex in $6$ vertices (given by its facets)
\[
 \Delta=\{\{1,2,3,4,5\},\{1,2,3,6\},\{4,5,6\}\},
 \]
whose f-vector is $(1,6,15,14,6,1)$.
The monomial ideal $IK_\Delta\subseteq\kb[x_1,\dots,x_6]$ is minimally generated by the set 
\[
G(IK_\Delta)=\{x_6,x_4x_5,x_1x_2x_3\}.
\]

The ideal $J=\langle a,b^2,c^3\rangle\subseteq\kb[a,b,c]$ is a depolarization of $IK_\Delta$. The facets of $K^\1_J$ are $\{\{a,b\},\{a,c\},\{b,c\}\}$, $K^\1_J$ is an empty triangle in three vertices, hence its f-vector is $(1,3,3)$, and $\dim_\kb\widetilde{H}_i(K^\1_J;\kb)=1$ if $i\in\{0,1\}$, and it is $0$ otherwise. This is also the homology of the original complex $\Delta$.
\end{example}
\begin{remark}
Depolarization of simplicial complexes can be seen as finding groups of vertices that form  a simplex, such that they can be sorted so that the whole simplex collapses to the last vertex. The described algebraic approach is a way to find such groups. Observe that not every collapse in a simplicial complex corresponds to a depolarization of its Stanley-Reisner ideal.
\end{remark}

\section{Computing Alexander duality via depolarization}\label{sec:AlexanderDual}
 Computing the facets of the Alexander dual of a simplicial complex or a monomial ideal %has been studied in the literature, and 
is known to be equivalent to the hypergraph transversal problem \cite{EGM02}. We can represent the simplicial complex $\Delta$ by its set of facets $\Fc(\Delta)$. The set of minimal non-faces of $\Delta$ corresponds to the minimal generators of the ideal $I_\Delta$, and also to the set of minimal transversals (or {\em minimal hitting sets}) of the family of complements of the facets of $\Delta$. Let $\Hc=\{V\setminus F \vert F \in \Fc(\Delta)\}$, the facets of $\Delta^\vee$ are the minimal transversals of the set family $\Hc$.
The hypergraph transversal problem is, in turn, equivalent to the monotone boolean dualization problem, whose complexity is quasi-polynomial on the sum of the sizes of the input and output \cite{RR25}. For these problems, the size of the output can be exponential on the size of the input. To see this, consider for instance the ideal $I_n=\langle x_iy_i\mid i\in[n]\rangle\subseteq\kb[x_1,\dots,x_n,y_1,\dots,y_n]$; the number of minimal generators of $I_n$ is $n$, and the number of generators of $I^\vee$ is $2^n$.  There exist some polynomial-time algorithms for many special classes of graphs and boolean functions, but still the exact complexity of the general problem is an open question.

 Given an abstract simplicial complex $\Delta$, monomial depolarization can be used to construct another complex $\Delta'$, smaller than $\Delta$, with the same Betti numbers. In general, this procedure can be used to efficiently compute features of $\Delta$ by means of $\Delta'$, which are preserved (or easily transcribed) under polarization and depolarization. We illustrate this procedure by the computation of the Alexander dual of a given simplicial complex. We take advantage of depolarization as a reduction of $\Delta$, in order to lower the input and output size of the problem. We propose Algorithm \ref{alg:AlexanderDual}, that performs this computation at the level of the corresponding Koszul ideals.

\begin{algorithm}[ht]
\caption{Alexander dual}
\label{alg:AlexanderDual}
	%\DontPrintSemicolon
	\KwData{A simplicial complex $\Delta$ given by its set of facets $\Fc(\Delta)$}
	\KwResult{The list of facets $\Fc(\Delta^\vee)$ of the Alexander dual $\Delta^\vee$}
	\Begin{
	$G(I_\Delta^\vee)\longleftarrow \{\xb_{[n]\setminus \sigma}\mid \sigma \in \Fc(\Delta)\}$\;
        $J\longleftarrow \mbox{ depolarization of }I_\Delta^\vee$\;
        Compute $J^\vee$\;
        From $J^\vee$ obtain $(I_\Delta^\vee)^\vee=I_\Delta$\;
        $\Fc(\Delta^\vee)\longleftarrow\{[n]\setminus\mu\mid \xb^\mu\in G({I_\Delta})\}$\;	
	\Return{$\Fc(\Delta^\vee)$
    }\;
        }
\end{algorithm}

Steps $2$ and $6$ of this algorithm have polynomial (in fact lineal) complexity on their respective inputs. Observe that in Step $2$ we obtain $IK_\Delta$, which equals $I_\Delta^\vee$. Step $3$ can be done in polynomial time with respect to the input, and so can be done step $5$. In step $4$ we face the exponential increase in the size of the ideal. However, since $J$ has a smaller number of variables and generators, the actual performance of this algorithm is expected to be better than to compute $\Delta^\vee$ directly from $I_\Delta^\vee$ (i.e., from $\Fc(\Delta)$).
Steps $2,3$ and $6$ of Algorithm \ref{alg:AlexanderDual} are straightforward and have been already described in the previous sections. For the computation of Alexander dual in step $4$, there are several available algorithms, being the one described in \cite{R09} the most efficient, to the best of our knowledge; this algorithm is the one implemented in state-of-the-art computer algebra systems like \texttt{CoCoA} \cite{CoCoA} and \texttt{Macaulay2} \cite{M2}. There remains to describe step $5$, i.e., to give a polynomial time algorithm to obtain the Alexander dual of a monomial ideal $I$ given the Alexander dual of a depolarization $J$ of $I$. This is done using the next proposition.

\begin{proposition}\label{prop:AlexanderPolarization}
Let $I$ be a monomial ideal. For each $\xb^\nu\in G(I^\vee)$ consider
\[
M^\nu=\{\prod_{i\in\supp(\nu)} x_{i,j_i}\mid j_i\in \{1,\dots,(\mu_I\setminus\nu)_i\} \ \forall i\}.
\]
% Let $\mu_I=(a_1,\dots,a_n)$ the minimum common multiple of all the minimal monomial generators of a monomial ideal $I$. For each $\xb^\nu\in G(I^\vee)$ let $r^{\xb^\nu}_i=a_i+1-\nu_i$, and let $M^{\xb^\nu}$ the set of monomials
% \[
% M^{\xb^\nu}=\{\prod_{i=1}^n x_{i,j}\mid j\in \{1,\dots,r^{\xb^\nu}_i\}\}.
% \]
Then, the Alexander dual ideal of $\Pc(I)$ is generated by
\[
\Pc(I)^\vee=\langle \bigcup_{\xb^\nu\in G(I^\vee)}M^\nu \rangle.
\]
\end{proposition}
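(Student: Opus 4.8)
The plan is to prove equality of the two ideals by a direct double inclusion, rewriting each Alexander dual through its intersection formula. Since $\Pc(\mu_I)=\1$ and $\Pc(I)$ is squarefree with minimal generating set $\{\Pc(m_1),\dots,\Pc(m_r)\}$ (polarization preserves minimal generating sets, and by convention the dual of $\Pc(I)$ is taken with respect to $\mu_{\Pc(I)}=\1$), we have $\Pc(I)^\vee=\bigcap_{k=1}^r\mk^{\Pc(m_k)}$, where $\mk^{\Pc(m_k)}=\langle x_{i,j}\mid i\in\supp(m_k),\ 1\le j\le (m_k)_i\rangle$. Thus a squarefree monomial $w$ lies in $\Pc(I)^\vee$ exactly when, for every $k$, some variable $x_{i,j}$ dividing $w$ satisfies $i\in\supp(m_k)$ and $j\le (m_k)_i$. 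On the other side, using the convention $I^\vee=I^\vee_{\mu_I}$, a generator $\xb^\nu\in G(I^\vee)$ is characterized by the condition that for each $k$ there is $i\in\supp(m_k)$ with $\nu_i\ge (\mu_I)_i+1-(m_k)_i$, i.e.\ $(m_k)_i\ge(\mu_I\setminus\nu)_i$. I would carry the whole argument in this hitting-set reformulation, the recurring arithmetic being the identity $(\mu_I\setminus\nu)_i=(\mu_I)_i+1-\nu_i$ for $i\in\supp(\nu)$.

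For the inclusion $\langle\bigcup_\nu M^\nu\rangle\subseteq\Pc(I)^\vee$, I take $w=\prod_{i\in\supp(\nu)}x_{i,j_i}\in M^\nu$ and fix $k$. Because $\xb^\nu\in I^\vee$, there is $i\in\supp(m_k)$ with $(m_k)_i\ge(\mu_I)_i+1-\nu_i=(\mu_I\setminus\nu)_i\ge j_i$; since this forces $\nu_i\ge 1$, the index $i$ lies in $\supp(\nu)$, so $x_{i,j_i}$ divides $w$ and witnesses $w\in\mk^{\Pc(m_k)}$. As $k$ is arbitrary, $w\in\Pc(I)^\vee$.

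For the reverse inclusion it suffices to show every minimal generator $w$ of $\Pc(I)^\vee$ is divisible by an element of some $M^\nu$. First I would record that such a $w$ uses at most one variable per block: within block $i$ the variable $x_{i,1}$ belongs to every $\mk^{\Pc(m_k)}$ hit by any $x_{i,j}$, so a second variable of the same block is redundant and would contradict minimality. Writing $w=\prod_{i\in\sigma}x_{i,j_i}$, I set $\nu_i=(\mu_I)_i+1-j_i$ for $i\in\sigma$ and $\nu_i=0$ otherwise, so that $\supp(\nu)=\sigma$ (as $j_i\le(\mu_I)_i$). The hitting property of $w$ translates exactly into $\xb^\nu\in I^\vee$; I then pass to a generator $\xb^{\nu'}\in G(I^\vee)$ dividing $\xb^\nu$, and reusing the same indices $j_i$ on $\supp(\nu')\subseteq\sigma$ produces a monomial of $M^{\nu'}$ dividing $w$, since $\nu'_i\le\nu_i$ gives $j_i\le(\mu_I)_i+1-\nu'_i=(\mu_I\setminus\nu')_i$.

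I expect the main obstacle to be the ``at most one variable per block'' reduction in the reverse inclusion: it is the one place where minimality of the generators of $\Pc(I)^\vee$ is genuinely needed, and it is precisely what allows a single multi-index $\nu$ to be recovered from the polarized monomial $w$. The rest is routine verification that the chosen second indices $j_i$ stay within the allowed range $\{1,\dots,(\mu_I\setminus\nu)_i\}$, which follows from the displayed arithmetic identity together with the divisibility $\nu'\le\nu$.
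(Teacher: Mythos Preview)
Your proof is correct and follows essentially the same approach as the paper: both unwind the intersection formulas $I^\vee=\bigcap_k\mk^{\mu_I\setminus m_k}$ and $\Pc(I)^\vee=\bigcap_k\mk'^{\Pc(m_k)}$, identify the generators of each factor, and match them up via the identity $(\mu_I\setminus\nu)_i=(\mu_I)_i+1-\nu_i$. The paper phrases this as a correspondence between the (redundant) generating sets $G_1$ and $G_2$ obtained by taking all $\lcm$'s, whereas you carry out an explicit double inclusion using the hitting-set criterion; your ``one variable per block'' step is exactly what the paper's phrase ``or is redundant'' hides.
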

\begin{proof}
%Let $I=\langle \xb^{m_1},\dots,\xb^{m_r}\rangle\subseteq R$, and let $\mu=\mu_I$. We know that
Let $I=\langle \xb^{m_1},\dots,\xb^{m_r}\rangle\subseteq R$, and let $\mu=\mu_I$. Since $\1\setminus\Pc(m_j)=\Pc(m_j)$ for all $j$, we know that
\[
 I^\vee=\bigcap_{j=1}^r\mathfrak{m}^{\mu\setminus m_j}, \mbox{ and }    
 \Pc(I)^\vee=\bigcap_{j=1}^r\mathfrak{m'}^{\1 \setminus \Pc(m_j)}=\bigcap_{j=1}^r\mathfrak{m'}^{\Pc(m_j)},   
\]
where $\mathfrak{m}'$ is the ideal generated by the variables in $\Pc(R)$. Observe that $I^\vee$ and $\Pc(I)^\vee$ are, respectively, generated by
\begin{align*}
    G_1 &= \left\{ \lcm\{ a_j : j \in [r] \} \mid a_j \in G(\mathfrak{m}^{\mu\setminus m_j}) \ \forall j\right\}, \quad \text{and} \\
    G_2 &= \left\{ \lcm\{ a'_j : j \in [r] \} \mid a'_j \in G(\mathfrak{m'}^{\Pc(m_j)}) \ \forall j\right\}.
\end{align*}
Noting that $\mu\setminus(\mu\setminus m_j) = m_j$, that $G(\mathfrak{m}^{\mu\setminus m_j}) = \bigcup_{i\in\supp(m_j)}\{ x_i^{\mu_i+1-(m_j)_i}\}$, and that $G(\mathfrak{m'}^{\Pc(m_j)}) = \bigcup_{i\in\supp(m_j)}\{ x_{i,1}, \ldots, x_{i,(m_j)_i}\}$, we have that for every $x^\alpha \in G_1$, every monomial $\prod_{i\in\supp(\alpha)} x_{i,j_i}$, with $j_i \in [(\mu\setminus\alpha)_i]$ is in $G_2$ for each $i$. Since every element in $G_2$ is of this form or is redundant, and since redundant elements of $G_1$ produce redundant elements of $G_2$, the result holds.
%Observe that $\supp(m_j)=\supp(\mu\setminus m_j)$ for all $j$, that $(m_j)_i=(\mu\setminus (\mu\setminus m_j))_i$ for all $i,j$, and that $\1\setminus\Pc(m_j)=\Pc(m_j)$ for all $j$. Therefore, each $\mathfrak{m}^{\mu\setminus m_j}$ corresponds to $\mathfrak{m'}^{\Pc(m_j)}$, and the variables in the support of $\Pc(m_j)$ are the polarizations of the variables in $\supp(m_j)=\supp(\mu\setminus m_j)$.
%Observe that $\supp(m_j)=\supp(\mu\setminus m_j)$ for all $j$, and that $(m_j)_i=(\mu\setminus (\mu\setminus m_j))_i$ for all $i,j$. Therefore, each $\mathfrak{m}^{\mu\setminus m_j}$ corresponds to $\mathfrak{m'}^{\Pc(m_j)}$, and the variables in the support of $\Pc(m_j)$ are the polarizations of the variables in $\supp(m_j)=\supp(\mu\setminus m_j)$.

%Now, each $\xb^\nu$ in $\bigcap_{j=1}^r\mathfrak{m}^{\mu\setminus m_j}$ is a combination of powers of variables in the support of different ideals $\mathfrak{m}^{\mu\setminus m_j}$; since these supports are paired with the ones in the corresponding $\mathfrak{m'}^{\Pc(m_j)}$ then $\bigcap_{j=1}^r\mathfrak{m'}^{\Pc(m_j)}$ has several monomials consisting in one variable $x_{i,i'}$ for each $x_i\in\supp(\xb^\nu)$. The choice for each of these variables is $i'\in\{1,\dots,(\mu\setminus\nu)_i)\}$, and therefore, the set of monomials corresponding to $\xb^\nu$ is $M^\nu$, hence $\{M^\nu\mid \nu\in G(I)\}$ generates $\Pc(I)^\vee$.
\end{proof}
\begin{example}
Let $\Delta$ a simplicial complex in $10$ vertices whose set of facets and Koszul ideal are given by
\begin{align*}
\Fc(\Delta)=&\{\{v_5,v_6,v_7,v_8,v_9,v_{10}\},\{v_1,v_2,v_3,v_5,v_6,v_{10}\},\{v_3,v_9\},\\
&\{v_1,v_2,v_3,v_4,v_5,v_6\}\},\\
I_\Delta^\vee=&\langle v_1v_2v_3v_4,v_4v_7v_8v_9,v_1v_2v_4v_5v_6v_7v_8v_{10},v_7v_8v_9v_{10} \rangle\subseteq \kb[v_1,\dots,v_{10}].
\end{align*}

Let
%J=\langle x^3y, yz^3,x^2y^3z^2t,z^3t\rangle\in R=\kb[x,y,z,t]$
$J=\langle x^4, x z^3, x^3 y^3 z^2, y z^3 \rangle\in R=\kb[x,y,z]$
a depolarization of $I$. Its Alexander dual is
%$J^\vee=\langle x^2z, xyz, y^3z, xz^2, y^3t, xzt\rangle$.
$J^\vee=\langle x^4 y^3, x^2 z, xyz, x z^2 \rangle$.
The sets $M^\nu$ for $\xb^\nu\in G(J^\vee)$ are
%\begin{align*}
%    M^{x^2z}=&\{x_i z_k \mid i \in \{1,2\}, k \in \{1,2,3\}\},\\
%    M^{xyz}=&\{x_i y_j z_k \mid i \in \{1,2,3\}, j \in \{1,2,3\}, k \in \{1,2,3\}\},\\
%    M^{y^3z}=&\{y_1 z_k \mid k \in \{1,2,3\}\},\\
%    M^{xz^2}=&\{x_i z_k \mid i \in \{1,2,3\}, k \in \{1,2\}\},\\
%    M^{y^3t}=&\{y_{1}t_{1}\},\\
%    M^{xzt}=&\{x_i z_k t_1 \mid i \in \{1,2,3\}, k \in \{1,2,3\}\}.
%\end{align*}
\begin{align*}
    M^{x^4 y^3}&=\{x_1 y_1\},\\
    M^{x^2 z}&=\{x_i z_k \mid i \in \{1,2,3\}, k \in \{1,2,3\}\},\\
    M^{xyz}&=\{x_i y_j z_k \mid i \in \{1,2,3,4\}, j \in \{1,2,3\}, k \in \{1,2,3\}\},\\
    M^{x z^2}&=\{x_i z_k \mid i \in \{1,2,3,4\}, k \in \{1,2\}\}.
\end{align*}

Hence, the minimal generating set of the Alexander dual of $\Pc(J)$ is
%\begin{align*}
%    \Pc(J)^\vee=\langle& x_1z_1,x_2z_1,x_3z_1,x_1z_2,x_2z_2,x_3z_2,x_1z_3,x_2z_3,y_1z_1,y_1z_2,y_1z_3,y_1t_1,\\
%    &x_3y_2z_3,x_3y_3z_3,x_3z_3t_1\rangle,
%\end{align*}
\begin{align*}
 \Pc(J)^\vee=\langle& x_1y_1,x_1z_1,x_2z_1,x_3z_1,x_4z_1,x_1z_2,x_2z_2,x_3z_2,x_4z_2,x_1z_3,x_2z_3,x_3z_3,\\
 &x_4y_1z_3,x_4y_2z_3,x_4y_3z_3\rangle,
\end{align*}
which gives $I_\Delta$ under the correspondence
%$x_1\mapsto v_1, x_2\mapsto v_2, x_3\mapsto v_3, y_1\mapsto v_4, y_2\mapsto v_5, y_3\mapsto v_6, z_1\mapsto v_7, z_2\mapsto v_8, z_3\mapsto v_9, t_1\mapsto v_{10}$.
$x_1\mapsto v_4, x_2\mapsto v_1, x_3\mapsto v_2, x_4\mapsto v_3, y_1\mapsto v_{10}, y_2\mapsto v_5, y_3\mapsto v_6, z_1\mapsto v_7, z_2\mapsto v_8, z_3\mapsto v_9$.
Hence, we have
\begin{align*}
   \Fc(\Delta^\vee)=&\{\{v_1,v_2,v_3,v_5,v_6,v_7,v_8,v_9\},\{v_1,v_2,v_3,v_5,v_6,v_8,v_9,v_{10}\},\\
   &\{v_2,v_3,v_4,v_5,v_6,v_8,v_9,v_{10}\},\{v_1,v_3,v_4,v_5,v_6,v_8,v_9,v_{10}\},\\
   &\{v_1,v_2,v_4,v_5,v_6,v_8,v_9,v_{10}\},\{v_1,v_2,v_3,v_5,v_6,v_7,v_9,v_{10}\},\\
   &\{v_2,v_3,v_4,v_5,v_6,v_7,v_9,v_{10}\},\{v_1,v_3,v_4,v_5,v_6,v_7,v_9,v_{10}\},\\
   &\{v_1,v_2,v_4,v_5,v_6,v_7,v_9,v_{10}\},\{v_1,v_2,v_3,v_5,v_6,v_7,v_8,v_{10}\},\\
   &\{v_2,v_3,v_4,v_5,v_6,v_7,v_8,v_{10}\},\{v_1,v_3,v_4,v_5,v_6,v_7,v_8,v_{10}\},\\
   &\{v_1,v_2,v_4,v_5,v_6,v_7,v_8\},\{v_1,v_2,v_4,v_6,v_7,v_8,v_{10}\},\\
   &\{v_1,v_2,v_4,v_5,v_7,v_8,v_{10}\}\}.
\end{align*}
$J$ and $\Pc(J)$ have $4$ minimal generators. $J$ is an ideal in $4$ variables, and $\Pc(J)$ in $10$ variables. On the other hand, $J^\vee$ has $6$ generators in $4$ variables, while $\Pc(J)^\vee$ has $15$ generators in $10$ variables. Even in a small example like this, the advantage of computing the dual using the depolarization is apparent.
% Observe that $J$ and $\Pc(J)$ both have $4$ minimal generators, $J$ is an ideal in $3$ variables, and $\Pc(J)$ in $10$ variables. On the other hand, $J^\vee$ has $4$ generators in $3$ variables, while $\Pc(J)^\vee$ has $15$ generators in $10$ variables. Even in a small example like this, the advantage of computing the dual using the depolarization is apparent.

\end{example}
Note that the description of the sets $M^\nu$ in Proposition \ref{prop:AlexanderPolarization} may be very redundant, but their computation can be optimized proceeding by ascending levels on the poset of the supports of the monomials in $G(I^\vee)$ ordered by divisibility (or inclusion).

\subsection{Computer experiments}\ \\ 
In this section we present a series of computer experiments in order to show the difference in computing time for the dual of a monomial ideal versus the dual of its polarization. The goal of this section is to show that computing the Alexander dual is computationally more expensive for the polarization of an ideal than for the ideal itself. For this, we use several examples of different kinds, exploring the role of degree, regularity and the shape of the Betti diagram (see Appendix \ref{ap:Betti}). We raise the degrees of the monomials involved, so that the difference in resources demand becomes more evident therefore demonstrating the advantages of using a depolarization $J$ of the squarefree ideal $I_\Delta^\vee$ in step \texttt{4} of Algorithm \ref{alg:AlexanderDual}.%particularly when the degree of the generators is high.

In all our examples, we generate several classes of monomial ideals and polarize them. This polarized ideal is thus a squarefree one, that plays the role of $I_\Delta^\vee$ in Algorithm \ref{alg:AlexanderDual}, while the original ideal plays the role of its depolarization $J$. For each ideal we make two comparisons: first we compare the {\em size}, in terms of number of generators, of the dual of the original ideal and the dual of its polarization; second, we compare the {\em time} taken to compute the dual of both ideals. In all the tables in this section, $\#(J)$ denotes the number of minimal generators of $J$ (analogously for $J^\vee$ and $(I_\Delta^\vee)^\vee=I_\Delta$). Recall that the number of generators of $I_\Delta^\vee$ is the same that the number of generators of $J$. Columns $t(J^\vee)$ and $t(I_\Delta)$ indicate the computation time in seconds for the computation of the Alexander dual of $J$ and $I_\Delta^\vee$ respectively. The times for computing the Betti numbers of $J$ and $I_\Delta^\vee$ (recall that they are the same) are given in the tables. The column {\em sizeRes} shows the size of the resolution of $J$ (and $I_\Delta^\vee$) computed as the total sum of the Betti numbers.
All experiments were run on an \texttt{M1} processor with \texttt{8GB RAM} using the computer algebra system \texttt{Macaulay2} (version \texttt{1.25.06}) \cite{M2}. All times reported are in seconds as retrieved by the \texttt{time} command in \texttt{Macaulay2}. \texttt{OOM} in a cell indicates \texttt{Out Of Memory}. The algorithm for computing Alexander dual ideals is the {\em slice algorithm} \cite{R09}.

\subsubsection{Powers of irrelevant ideals and ideals generated by powers of variables.}
Our first example consists of ideals of the form $I=\mk^k\subseteq\kb[x_1,\dots,x_n]$, minimally generated by all monomials of degree $k$ in $R$. Table \ref{table:equigenerated} shows the results for these ideals. In the table, $n$ indicates the number of variables and $k$ indicates the power to which we raise the ideal $\mk=\langle x_1,\dots,x_n\rangle$, and $n'=nk$ is the number of variables of the ring of $I_\Delta^\vee$. For these ideals, $\#(G(J^\vee))<\#(G(J))=\#(G(I_\Delta^\vee))<\#(G(I_\Delta))$, hence the total size (input plus output) for the dualization algorithm is bigger in the squarefree case. The computation times show this dependence on total size. The times for computing the Betti numbers are bigger in the squarefree case; however, the time ratio is smaller for the Betti number computation than for the Alexander dual, due to the fact that the minimal free resolution of these ideals is linear, i.e., it has the simplest possible form, and also the size of the resolution (sizeRes) is small with respect to the size of the minimal generating set.
\begin{small}
    
 \begin{table}[h]
 \centering
 \resizebox{\textwidth}{!}{
     \begin{tabular}{c|c|c|r|r|r|r|r|r|r|r}
        \toprule
          $n$&$k$&$n'$&$\#(J)$&$\#(J^\vee)$&$\#(I_\Delta)$&$t(J^\vee)$&$t(I_\Delta)$&$t(\beta(J))$&$t(\beta(I_\Delta^\vee))$&sizeRes\\
          \midrule
          $5$&$10$&$50$&1001&715&2002&0.0113&0.0902&0.1645&0.3128&13441\\
          $5$&$15$&$75$&3876&3060&11628&0.0454&0.7258&2.8479&5.7010&54911\\
          $5$&$20$&$100$&10626&8855&42504&0.1016&5.1972&21.2733&68.5737&154881\\
          $5$&$25$&$125$&23751&20475&118755&0.2158&31.9745&191.3712&612.9551&352351\\
          $5$&$30$&$150$&46376&40920&278256&0.5550&173.5453&1129.2431&2710.2512&696321\\
          $10$&$5$&$50$&2002&715&1001&0.0359&0.09228&34.8712&181.966&553983 \\
          $10$&$10$&$100$&92378&48620&92378&4.3685&69.8118&\texttt{OOM}&\texttt{OOM}&\texttt{OOM}\\
          \bottomrule
     \end{tabular}
     }
     \caption{Computation of the dual of the ideal $\mk^k$ and the dual of its polarization for several $n$ and $k$.}
      \label{table:equigenerated}
 \end{table}

\end{small}

 Our second example consists of ideals generated by powers of the variables, i.e., ideals of the form $J=\langle x_1^k,\dots x_n^k\rangle\subseteq \kb[x_1,\dots,x_n]$ for some $n$ and $k$. The dual ideal $J^\vee=\langle x_1\cdots x_n\rangle$ is generated by just one monomial, while the dual ideal $(I_\Delta^\vee)^\vee=I_\Delta$ is generated by $k^n$ monomials. This is one example in which the number of generators of the dual ideal is exponential with respect to the number of generators of the ideal (both $J$ and $I_\Delta^\vee$ are generated by $n$ monomials). Table \ref{table:powersVariables} shows the times and sizes for these ideals. In this case the size of the resolution is relatively bigger with respect to the number of generators of the ideal, but smaller than the dual of the polarization.

 \begin{table}[h]
  \centering
 \resizebox{\textwidth}{!}{
     \begin{tabular}{c|c|c|r|r|r|r|r|r|r|r}
          \toprule
          $n$&$k$&$n'$&$\#(J)$&$\#(J^\vee)$&$\#(I_\Delta)$&$t(J^\vee)$&$t(I_\Delta)$&$t(\beta(J))$&$t(\beta(I_\Delta^\vee))$&sizeRes\\
          \midrule
          $10$&$5$&$50$&10&1&9765625&0.0044&3.8607&0.0587&0.0416&1023\\ 
          $10$&$6$&$60$&10&1&60466176&0.0043&70.8422&0.0704&0.0414&1023\\ $10$&$7$&$70$&10&1&282475249&0.0043&\texttt{OOM}&0.0576&0.0460&1023\\ $10$&$8$&$80$&10&1&1073741824&0.0049&\texttt{OOM}&0.0708&0.0443&1023\\  
          \bottomrule
     \end{tabular}
     }
     \caption{Computation of the dual of $I=\langle x_1^k,\dots,x_n^k\rangle$ and the dual of its polarization for several $n$ and $k$.}
      \label{table:powersVariables}
 \end{table}

\subsubsection{Sums of ideals of the form $J_{k,n,m}=\langle \prod_{i\in\sigma}x_i^m\mid \sigma\subseteq[n], \vert\sigma\vert=k \rangle$}
Consider the ideal $J_{k,n,m}$ generated by all products of $k$ variables, each of them raised to the power $m$, i.e.,
\[
J_{k,n,m}=\langle \prod_{i\in\sigma}x_i^m\mid \sigma\subseteq[n], \vert\sigma\vert=k \rangle.
\]
Take now a sequence $(m_1,\dots, m_l)$ for some $l\leq n$ and consider the ideal $J=J_{1,n,m_1}+\cdots+J_{l,n,m_l}$.

For example, take $n=4$, $l=3$ and the sequence $(m_i\mid i=1,\dots,3)=(4,2,1)$, then
\begin{align*}
J=&\langle x_1^4,x_2^4,x_3^4,x_4^4\rangle+\langle x_1^2x_2^2,x_1^2x_3^2,x_1^2x_4^2,x_2^2x_3^2,x_2^2x_4^2,x_3^2x_4^2\rangle\\
&+\langle x_1x_2x_3,x_1x_2x_4,x_1x_3x_4,x_2x_3x_4 \rangle.
\end{align*}
%Observe that without loss of generality, we can consider the sequence of exponents $(m_1,\dots,m_l)$ to be strictly decreasing.

Table \ref{table:sumsKN} shows the results for some ideals in this class. For each $n$ we consider the sequence $(2\lfloor\frac{n}{2}\rfloor-1, \dots,3,1)$ that consists of the first $\lfloor\frac{n}{2}\rfloor$ odd numbers. These ideals have a more complex resolution than the ones in the previous examples, in the sense that the generators of each module are not concentrated in one single degree, but instead spread along several ones. In these examples, the times for the computation of the dual ideal are small with respect to the computation of the Betti numbers of the ideal, unlike the previous example. The times for both computations are relatively high when compared to the previous classes of ideals considered.

 \begin{table}[h]
  \centering
 \resizebox{\textwidth}{!}{
     \begin{tabular}{c|c|r|r|r|r|r|r|r|r}
          \toprule
          $n$&$n'$&$\#(J)$&$\#(J^\vee)$&$\#(I_\Delta)$&$t(J^\vee)$&$t(I_\Delta)$&$t(\beta(J))$&$t(\beta(I_\Delta^\vee))$&sizeRes\\
          \midrule
          $6$&$30$&41&30&205&0.0020&0.0045&0.0226&0.0257&1131\\ 
          $7$&$35$&63&42&281&0.0054&0.0135&0.2566&0.3053&3277\\ 
          $8$&$56$&162&336&8113&0.0031&0.0946&0.9538&2.1037&37801\\   
          $9$&$63$&255&504&11959&0.0132&0.1458&4.8448&8.0140&116441\\   
          $10$&$90$&637&5040&470781&0.0343&5.6215&784.3065&4524.8317&1679683\\   

          \bottomrule
     \end{tabular}
     }
     \caption{Computation of the dual of ideals of the form $J=J_{1,n,2\lfloor\frac{n}{2}\rfloor-1}+\cdots+J_{\lfloor\frac{n}{2}\rfloor,n,1}$ and their polarizations.}
      \label{table:sumsKN}
 \end{table}

\subsubsection{Initial ideal of generic forms.}
%Our last set of examples is the following. 
Let $F_n\subseteq\mathbf{k}[x_1,\dots,x_n]$ be the ideal generated by $n$ generic quadratic forms in $n$ variables, and let $J_n=in_{\texttt{Lex}}(F_n)$ its initial ideal with respect to the \texttt{Degree Lex} ordering. Table \ref{table:generic} shows the times for the computations on these ideals. The times for the computation of the Betti numbers are much higher than the times for computing the dual. The difference in time for the computation of Betti numbers using the original ideal and its polarized one are bigger than in the previous cases. %Recall that the Betti numbers of an ideal and its polarization are the same, hence 
This example shows that for {\em complex} resolutions, it is worth depolarizing before computing Betti numbers.

 \begin{table}[h]
  \centering
 \resizebox{\textwidth}{!}{
     \begin{tabular}{c|c|r|r|r|r|r|r|r|r}
          \toprule
          $n$&$n'$&$\#(J)$&$\#(J^\vee)$&$\#(I_\Delta)$&$t(J^\vee)$&$t(I_\Delta)$&$t(\beta(J))$&$t(\beta(I_\Delta^\vee))$&sizeRes\\
          \midrule
          $8$&$35$&127&64&128&0.0030&0.0069&0.0239&0.1954&5461\\ 
          $9$&$44$&255&128&256&0.0061&0.0200&0.2598&0.7706&21845\\ 
          $10$&$54$&511&256&512&0.0087&0.0475&1.0138&5.0184&87381\\ 
          $11$&$65$&1023&512&1024&0.0186&0.0968&5.8176&46.0382&349525\\ 
          $12$&$77$&2047&1024&2048&0.0448&0.2098&39.7234&1260.9513&1398191\\ 
          \bottomrule
     \end{tabular}
     }
     \caption{Computation of the dual of initial ideals of $n$ generic forms and their polarizations.}
      \label{table:generic}
 \end{table}
\section{Conclusions and further work}\label{sec:conclusions}
We have translated the operations of polarization and depolarization from monomial ideals to simplicial complexes and demonstrated its advantages when performing demanding computations on simplicial complexes. In particular, we have applied these operations to computing Alexander dual of abstract simplicial complexes, showing that reducing the complex by depolarization can be a good tool for efficient algorithms.

On view of the computer experiments, one question that arises is how to describe or predict the ratio of the size, in terms of number of minimal monomial generators, of the dual of an ideal with respect to the dual of its polarization. Another interesting question is whether the ratio between the size of the dual of a monomial ideal and the size of the ideal itself, is related to some invariants of the ideal, such as regularity, size of the resolution, etc. (apart from the obvious relation to the ratio between the number of variables of the corresponding rings). Figure \ref{fig:ratios} shows that the behavior of the ratio between size of the dual ideal of the polarization and size of the original ideal versus the ratio between the number of variables of the corresponding rings is different among the different kinds of examples that we have seen. This is of course an important factor to estimate the difference in size of the duals, but as the examples show, it is not the only factor.

\begin{figure}[]
\centering
\begin{minipage}{.5\textwidth}
  \centering
  \includegraphics[width=\linewidth]{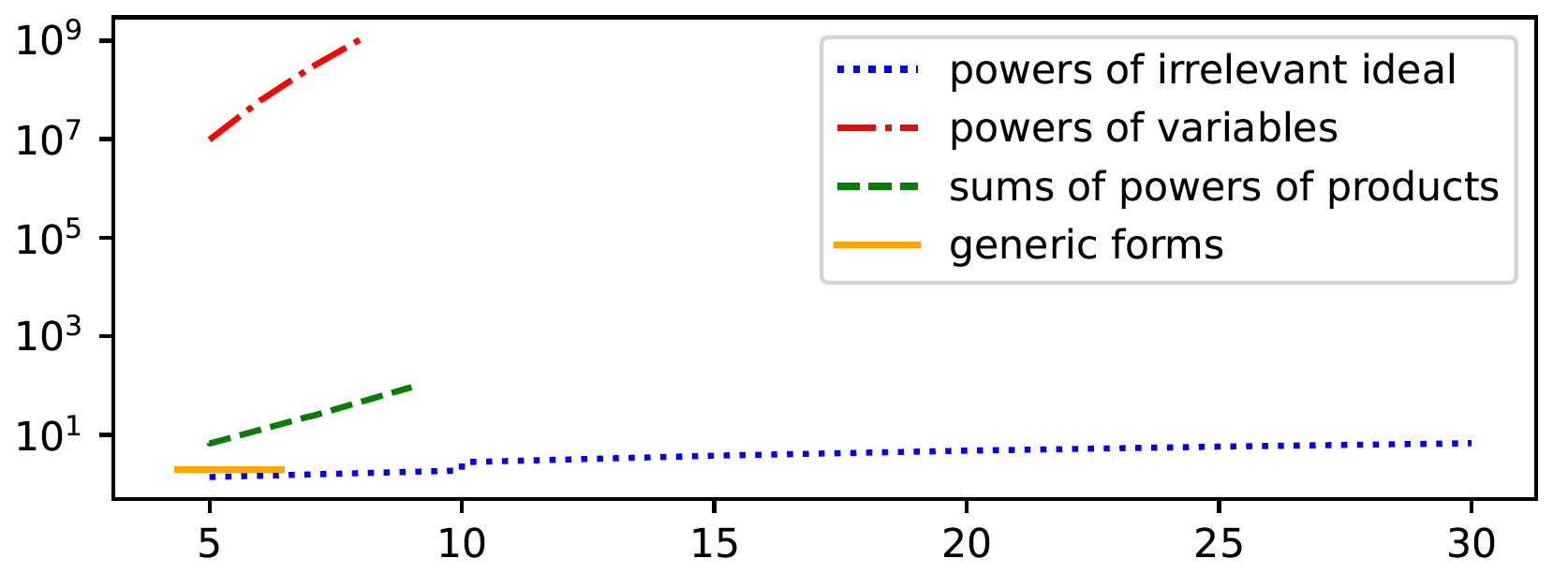}
  
\end{minipage}%
\begin{minipage}{.5\textwidth}
  \centering
  \includegraphics[width=\linewidth]{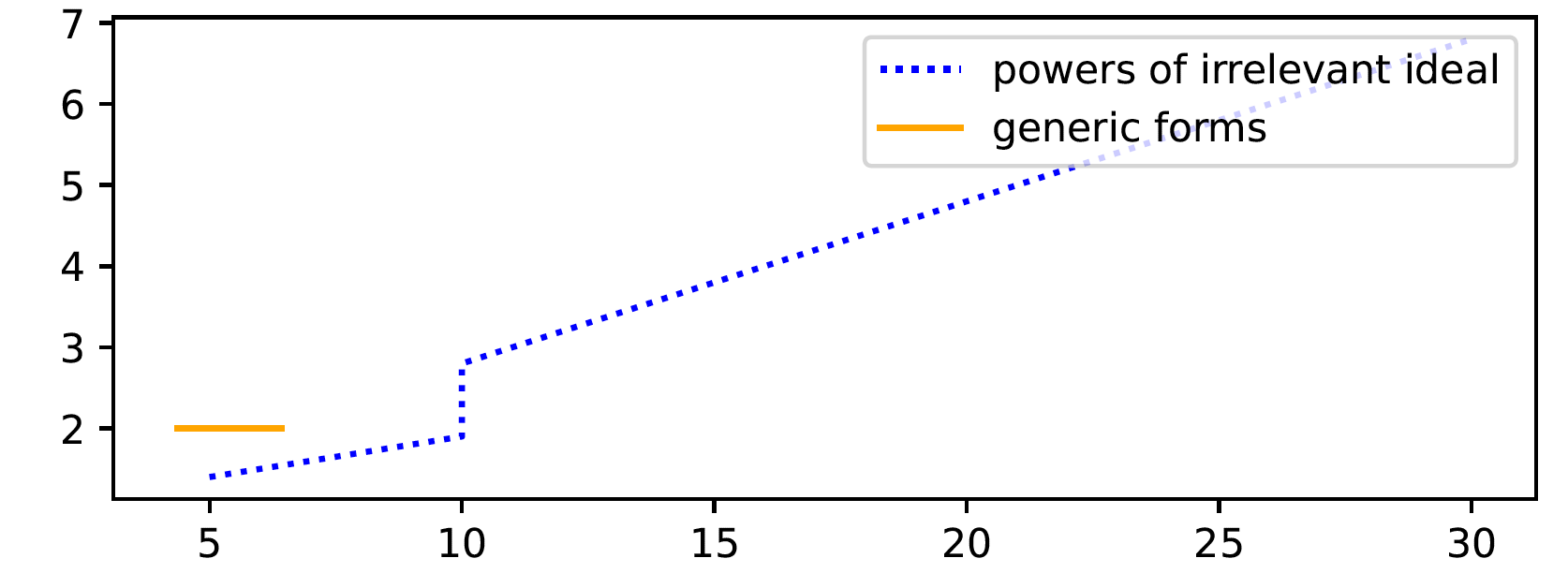}
  
\end{minipage}
\caption{Ratio between size of the dual ideal of the polarization and size of the original ideal versus ratio between the number of variables of the corresponding rings (right: detail of two of the example classes).}
\label{fig:ratios}
\end{figure}

\section*{Acknowledgements}
This work has been partially supported by grant INICIA2023/02 from Gobierno de La Rioja (Spain). The authors want to thank Rodrigo Iglesias, Laura Moreno and Julio Rubio for useful discussions and insights.

\appendix
\section{Shape of resolutions}\label{ap:Betti}
To illustrate the different structure of the resolutions of each class of examples, we show here an instance of each of them. Starting from the top left and reading clockwise, the following are the Betti diagrams corresponding to the resolutions of the four ideals
\begin{enumerate}
    \item[i)] $I=\langle x_1,\dots,x_5\rangle^{20}$, which has a linear resolution.
    \item[ii)] $I=\langle x_1^4,\dots,x_{10}^4\rangle$, which has a simple resolution in which each module is concentrated in one degree.
    \item[iii)] $J=J_{1,10,9}+\cdots+J_{5,10,1}$, whose resolution is big and with a complex structure of several layers.
    \item [iv)]$I$ is the initial ideal of $10$ generic quadratic forms with respect to the \texttt{Degree Lex} ordering, whose resolution is complete in the sense that each module has a nonzero Betti number for each possible degree smaller than or equal to the regularity of $I$.
\end{enumerate}

\begin{minipage}[t]{0.45\textwidth}
\centering
\resizebox{0.8\textwidth}{!}{
 $\begin{matrix}
      & 0 & 1 & 2 & 3 & 4 & 5\\
     \text{total:} & 1 & 10\,626 & 40\,480 & 57\,960 & 36\,960 & 8\,855\\
     0: & 1 & . & . & . & . & .\\
     1: & . & . & . & . & . & .\\
     2: & . & . & . & . & . & .\\
     3: & . & . & . & . & . & .\\
     4: & . & . & . & . & . & .\\
     5: & . & . & . & . & . & .\\
     6: & . & . & . & . & . & .\\
     7: & . & . & . & . & . & .\\
     8: & . & . & . & . & . & .\\
     9: & . & . & . & . & . & .\\
     10: & . & . & . & . & . & .\\
     11: & . & . & . & . & . & .\\
     12: & . & . & . & . & . & .\\
     13: & . & . & . & . & . & .\\
     14: & . & . & . & . & . & .\\
     15: & . & . & . & . & . & .\\
     16: & . & . & . & . & . & .\\
     17: & . & . & . & . & . & .\\
     18: & . & . & . & . & . & .\\
     19: & . & 10\,626 & 40\,480 & 57\,960 & 36\,960 & 8\,855
     \end{matrix}$
      }
\end{minipage}
\begin{minipage}[t]{0.55\textwidth}
\centering
 \resizebox{0.7\textwidth}{!}{
  $\begin{matrix}
       & 0 & 1 & 2 & 3 & 4 & 5 & 6 & 7 & 8 & 9 & 10\\
      \text{total:} & 1 & 10 & 45 & 120 & 210 & 252 & 210 & 120 & 45 & 10 & 1\\
      0: & 1 & . & . & . & . & . & . & . & . & . & .\\
      1: & . & . & . & . & . & . & . & . & . & . & .\\
      2: & . & . & . & . & . & . & . & . & . & . & .\\
      3: & . & 10 & . & . & . & . & . & . & . & . & .\\
      4: & . & . & . & . & . & . & . & . & . & . & .\\
      5: & . & . & . & . & . & . & . & . & . & . & .\\
      6: & . & . & 45 & . & . & . & . & . & . & . & .\\
      7: & . & . & . & . & . & . & . & . & . & . & .\\
      8: & . & . & . & . & . & . & . & . & . & . & .\\
      9: & . & . & . & 120 & . & . & . & . & . & . & .\\
      10: & . & . & . & . & . & . & . & . & . & . & .\\
      11: & . & . & . & . & . & . & . & . & . & . & .\\
      12: & . & . & . & . & 210 & . & . & . & . & . & .\\
      13: & . & . & . & . & . & . & . & . & . & . & .\\
      14: & . & . & . & . & . & . & . & . & . & . & .\\
      15: & . & . & . & . & . & 252 & . & . & . & . & .\\
      16: & . & . & . & . & . & . & . & . & . & . & .\\
      17: & . & . & . & . & . & . & . & . & . & . & .\\
      18: & . & . & . & . & . & . & 210 & . & . & . & .\\
      19: & . & . & . & . & . & . & . & . & . & . & .\\
      20: & . & . & . & . & . & . & . & . & . & . & .\\
      21: & . & . & . & . & . & . & . & 120 & . & . & .\\
      22: & . & . & . & . & . & . & . & . & . & . & .\\
      23: & . & . & . & . & . & . & . & . & . & . & .\\
      24: & . & . & . & . & . & . & . & . & 45 & . & .\\
      25: & . & . & . & . & . & . & . & . & . & . & .\\
      26: & . & . & . & . & . & . & . & . & . & . & .\\
      27: & . & . & . & . & . & . & . & . & . & 10 & .\\
      28: & . & . & . & . & . & . & . & . & . & . & .\\
      29: & . & . & . & . & . & . & . & . & . & . & .\\
      30: & . & . & . & . & . & . & . & . & . & . & 1
      \end{matrix}$
      }
\end{minipage}
\begin{minipage}{0.5\textwidth}
   
  \centering
 \resizebox{\textwidth}{!}{
 $\begin{matrix}
       & 0 & 1 & 2 & 3 & 4 & 5 & 6 & 7 & 8 & 9 & 10\\
      \text{total:} & 1 & 637 & 12\,360 & 73\,920 & 220\,815 & 392\,410 & 446\,316 & 330\,755 & 155\,310 & 42\,120 & 5\,040\\
      0: & 1 & . & . & . & . & . & . & . & . & . & .\\
      1: & . & . & . & . & . & . & . & . & . & . & .\\
      2: & . & . & . & . & . & . & . & . & . & . & .\\
      3: & . & . & . & . & . & . & . & . & . & . & .\\
      4: & . & 252 & 1\,050 & 1\,800 & 1\,575 & 700 & 126 & . & . & . & .\\
      5: & . & . & . & . & . & . & . & . & . & . & .\\
      6: & . & . & . & . & . & . & . & . & . & . & .\\
      7: & . & . & . & . & . & . & . & . & . & . & .\\
      8: & . & 10 & . & . & . & . & . & . & . & . & .\\
      9: & . & . & . & . & . & . & . & . & . & . & .\\
      10: & . & . & . & . & . & . & . & . & . & . & .\\
      11: & . & 210 & 2\,520 & 8\,190 & 12\,600 & 10\,350 & 4\,410 & 770 & . & . & .\\
      12: & . & . & . & . & . & . & . & . & . & . & .\\
      13: & . & 45 & . & . & . & . & . & . & . & . & .\\
      14: & . & 120 & 90 & . & . & . & . & . & . & . & .\\
      15: & . & . & 5\,040 & 17\,850 & 27\,720 & 22\,680 & 9\,600 & 1\,665 & . & . & .\\
      16: & . & . & 1\,680 & 15\,120 & 44\,100 & 63\,840 & 50\,400 & 20\,880 & 3\,570 & . & .\\
      17: & . & . & 720 & . & . & . & . & . & . & . & .\\
      18: & . & . & 1\,260 & 23\,400 & 69\,300 & 100\,800 & 79\,380 & 32\,760 & 5\,580 & . & .\\
      19: & . & . & . & 7\,560 & 60\,480 & 163\,800 & 226\,800 & 173\,880 & 70\,560 & 11\,880 & .\\
      20: & . & . & . & . & 5\,040 & 30\,240 & 75\,600 & 100\,800 & 75\,600 & 30\,240 & 5\,040
      \end{matrix}$
      }
\end{minipage}
\begin{minipage}{0.5\textwidth}
   
  \centering
 \resizebox{\textwidth}{!}{
$\begin{matrix}
       & 0 & 1 & 2 & 3 & 4 & 5 & 6 & 7 & 8 & 9\\
      \text{total:} & 1 & 511 & 3\,586 & 11\,260 & 20\,488 & 23\,536 & 17\,440 & 8\,128 & 2\,176 & 256\\
      0: & 1 & . & . & . & . & . & . & . & . & .\\
      1: & . & 9 & 36 & 84 & 126 & 126 & 84 & 36 & 9 & 1\\
      2: & . & 36 & 204 & 546 & 882 & 924 & 636 & 279 & 71 & 8\\
      3: & . & 84 & 546 & 1\,596 & 2\,730 & 2\,976 & 2\,109 & 946 & 245 & 28\\
      4: & . & 126 & 882 & 2\,730 & 4\,878 & 5\,499 & 4\,001 & 1\,833 & 483 & 56\\
      5: & . & 126 & 924 & 2\,976 & 5\,499 & 6\,376 & 4\,750 & 2\,220 & 595 & 70\\
      6: & . & 84 & 636 & 2\,109 & 4\,001 & 4\,750 & 3\,614 & 1\,721 & 469 & 56\\
      7: & . & 36 & 279 & 946 & 1\,833 & 2\,220 & 1\,721 & 834 & 231 & 28\\
      8: & . & 9 & 71 & 245 & 483 & 595 & 469 & 231 & 65 & 8\\
      9: & . & 1 & 8 & 28 & 56 & 70 & 56 & 28 & 8 & 1
      \end{matrix}$
      }
\end{minipage}
\end{document}